\documentclass[10pt]{article}

\usepackage{amsfonts}
\usepackage{amsmath}
\usepackage{amssymb}
\usepackage{amsthm}
\usepackage{enumerate}
\usepackage{mdwlist}
\usepackage{mathrsfs}
\usepackage{bussproofs}
\usepackage{graphicx}
\usepackage{latexsym}
\usepackage{titlesec}
\usepackage{stmaryrd}
\usepackage{verbatim}
\usepackage{tcolorbox}
\tcbset{colback=white}

\newtheorem{theorem}{Theorem}[section]

\newtheorem{lemma}[theorem]{Lemma}
\newtheorem{corollary}[theorem]{Corollary}
\newtheorem{notation}[theorem]{Notation}

\theoremstyle{remark}

\newcommand{\RR}{\ensuremath{\mathbb R}}

\newcommand{\NN}{\ensuremath{\mathbb N}}

\newcommand{\norm}[1]{\left\| {#1}\right\|}
\newcommand{\fix}[1]{\mathrm{Fix}({#1})}
\newcommand{\inter}[1]{\mathrm{Int}({#1})}
\newcommand{\diam}{\mathrm{diam}}
\newcommand{\ceil}[1]{\lceil {#1} \rceil}
\newcommand{\asym}{asymptotically decreasing}
\newcommand{\seg}[1]{\mathrm{seg}[{#1}]}

\title{A new metastable convergence criterion and an application in the theory of uniformly convex Banach spaces}
\author{Thomas Powell}
\date{}

\begin{document}

\maketitle



\begin{abstract}
We study a convergence criterion which generalises the notion of being monotonically decreasing, and introduce a quantitative version of this criterion, a so called metastable rate of asymptotic decreasingness. We then present a concrete application in the fixed point theory of uniformly convex Banach spaces, in which we carry out a quantitative analysis of a convergence proof of Kirk and Sims. More precisely, we produce a rate of metastability (in the sense of Tao) for the Picard iterates of mappings which satisfy a variant of the convergence criterion, and whose fixed point set has nonempty interior.
\end{abstract}

\maketitle

\section{Introduction}
\label{sec-intro}

Let $T:C\to C$ be a mapping on some closed subset $C$ of a complete metric space $X$. It is well known that whenever $T$ is a contraction, the sequence $(T^nx)_{n\in\NN}$ of Picard iterates converges to a fixed point of $T$, and that this result no longer holds in general when $T$ is nonexpansive. A natural question then arises: under what additional conditions can we guarantee convergence of the Picard iterates for nonexpansive $T$?

For $X$ a Banach space, it turns out that a combination of uniform convexity of $X$ together with nonemptyness of the interior of the fixed point set $\fix{T}$ suffices for this - an observation originally due to Beauzamy. In \cite{KirSim(1999.0)}, Kirk and Sims provide a proof of this fact and observe that the result holds under much broader conditions on $T$, namely that $T$ (or one of its iterates) is continuous and
\begin{equation}
\label{eqn-Tconvinf}
\lim_{n\to\infty}\norm{p-T^nx}=\inf_{n\in\NN}\norm{p-T^nx}
\end{equation}
for all $p\in\fix{T}$. In particular, the above conditions hold whenever $T$ is asymptotically nonexpansive \cite{GoeKir(1972.0)}, in other words when there exists a sequence $(\mu_n)_{n\in\NN}$ with $\mu_n\to 1$ such that for all $x,y\in C$:
\begin{equation*}
\norm{T^nx-T^ny}\leq \mu_n\norm{x-y}.
\end{equation*}
In this paper, we study the result of Kirk and Sims in the context of the proof mining program. The novelty of the work lies in the fact that we carry out an abstract quantitative analysis of the notion of `convergence to the infimum', and that the subsequent application constitutes one of the first proof theoretic studies of a nonempty interior condition in the theory of uniformly convex Banach spaces.\\

\noindent Proof mining is a branch of \emph{applied proof theory} focused on the extraction of numerical information from nonconstructive proofs in mathematics. Developed in the 1990s by U. Kohlenbach, it has already achieved considerable success in functional analysis, where a corresponding body of so-called \emph{logical metatheorems} have now been established \cite{GerKoh(2008.0),Kohlenbach(2005.0)} which guarantee the extraction of highly uniform bounds from a large class of proofs. For a broader overview of the proof mining program, the reader is encouraged to consult the standard text \cite{Kohlenbach(2008.0)}.

The techniques of proof mining are often applied to proofs of convergence theorems, in which case we first have to specify what kind of numerical information we expect to be able to extract. Typically, this depends on the \emph{logical} structure of the theorem at hand. Convergence in general, when formulated as a Cauchy property, is a $\forall\exists \forall$ statement
\begin{equation}
\label{eqn-conv}
\forall \varepsilon>0\exists n\forall i,j\geq n(|x_i-x_j|\leq\varepsilon),
\end{equation}
and it is a well-known phenomenon that we cannot in general produce computable witnesses for theorems of this form. In the case of convergence this is witnessed by so-called Specker sequences \cite{Specker(1949.0)} - simple computable sequences $(x_n)_{n\in\NN}$ that do not have a computable rate of convergence i.e. a computable function $\phi(\varepsilon)$ satisfying
\begin{equation*}
\forall \varepsilon>0,i,j\geq \phi(\varepsilon)(|x_i-x_j|\leq\varepsilon).
\end{equation*}
That this is not possible in our specific setting, where $(x_n)_{n\in\NN}$ denotes a sequence of Picard iterates $(T^nx)_{n\in\NN}$, follows from \cite[Theorem 4.4 (2)]{Kohlenbach(2018.1)} which in turn adapts a construction of \cite{Neumann(2015.0)}: Already for $X=\RR$ it is shown that there exists a nonexpansive map $T:[0,1]\to [0,1]$ (which can be easily extended to one with $\inter{\fix{T}}\neq \emptyset$) such that the sequence $(T^n0)_{n\in\NN}$ has no computable rate of convergence.

In these situations, we consider instead the \emph{metastable} version of (\ref{eqn-conv}):
\begin{equation}
\label{eqn-metastablev}
\forall \varepsilon>0,\forall g:\NN\to\NN \;\exists  n \; \forall i,j\in [n,n+g(n)](|x_i-x_j|\leq \varepsilon).
\end{equation}
The statement (\ref{eqn-metastablev}) is known in logic as the Herbrand normal form of (\ref{eqn-conv}), and though the two are completely equivalent, (\ref{eqn-metastablev}) is computationally tractable in the sense that under very general conditions we can produce a computable rate of metastability for the $(x_i)$, that is a functional $\Phi(\varepsilon,g)$ satisfying
\begin{equation}
\label{eqn-metastable}
\forall \varepsilon>0,\forall g:\NN\to\NN\exists n\leq \Phi(\varepsilon,g)\forall i,j\in [n,n+g(n)](|x_i-x_j|\leq \varepsilon).
\end{equation}
Note that the existence of a computable rate of metastability does not contradict the impossibility of finding a computable rate of convergence, because the route from (\ref{eqn-metastablev}) to (\ref{eqn-conv}) requires proof by contradiction and is thus nonconstrutive. 

Recently, metastable convergence theorems in the sense of (\ref{eqn-metastablev}) have been made popular by T. Tao \cite{Tao(2008.1)} where they have found direct applications in ergodic theory. The extraction of effective rates of metastability is a standard result in proof mining, and has been accomplished in numerous different settings (see e.g. \cite{Kohlenbach(2005.1),KohKou(2015.0),KohLeu(2012.0)}).

The main result of this paper (Theorem \ref{res-mined}) is a highly uniform rate of metastability for the Picard iterates for mappings $T$ which satisfy (\ref{eqn-Tconvinf}), where $\inter{\fix{T}}\neq 0$ and $X$ is uniformly convex\footnote{Note that in the special case that $X$ is a Hilbert space and $T$ nonexpansive, convergence of the Picard iterates when $\inter{\fix{T}}\neq 0$ is an instance of a general convergence property for Fej\'er monotone sequences in Hilbert spaces \cite[Proposition 5.10]{BauCom(2010.0)}, and a rate of metastability for the Picard iterates in this case already follows from the analysis of this property given by Sipo\c s \cite[Proposition 4.2.3]{Sipos(2017.0)}.}. In order to achieve this, we need to provide \emph{quantitative analogues} for each of our assumptions. In particular,
\begin{itemize}

\item uniform convexity of $X$ is represented by a \emph{modulus of uniform convexity};

\item the property $\lim_{n\to\infty}\norm{p-T^nx}=\inf_{n\in\NN}\norm{p-T^nx}$ is represented by a \emph{metastable rate of asymptotic decreasingness}.

\end{itemize}

While moduli of uniform convexity are well known and have been widely applied in proof mining, metastable rates of asymptotic decreasingness are introduced and studied for the first time in this paper, and arise from a general analysis of convergence which we carry out in Section \ref{sec-inf}.

Rates of metastability are powerful in part because they enable us to produce \emph{direct} witnesses for $\forall \exists$ theorems. Generally speaking, if (\ref{eqn-conv}) is used as a lemma in the proof of some $\forall \exists$ theorem $\forall u\exists v P_0(u,v)$, a rate of metastability $\Phi$ can be used to construct a computable function $f_\Phi$ which satisfies $\forall u P_0(u,f_\Phi(u))$ (see \cite[Chapter 1]{Kohlenbach(2008.0)} for a more detailed discussion).

An illustration of this phenomenon can be given in our setting. For the special case where $T$ is nonexpansive, asymptotic regularity of the Picard iterates i.e.
\begin{equation*}
\forall \varepsilon>0\exists n\forall i\geq n(\norm{T^{i+1}x-T^ix}\leq\varepsilon)
\end{equation*}
follows directly from the fact that the iterates themselves converge. Since the sequence $(\norm{T^{i+1}x-T^ix})_{i\in\NN}$ is monotonically decreasing, asymptotic regularity is actually equivalent to the $\forall\exists$ statement
\begin{equation*}
\forall \varepsilon>0\exists n(\norm{T^{n+1}x-T^nx}\leq\varepsilon).
\end{equation*}
As a result, we are able to adapt our aforementioned rate of metastability to produce a direct rate of asymptotic regularity for the Picard iterates, and in Theorem \ref{res-lp} we provide a concrete instance of this rate in the case where $X$ is an $L_p$ space. This result forms yet another illustration of how methods from proof theory can be used to extract simple numerical bounds of low polynomial complexity from proofs which are prima-facie nonconstructive.

\section{Convergence to the infimum}
\label{sec-inf}

We begin by studying the notion that a sequence $(x_n)_{n\in\NN}$ of nonnegative real numbers converges to their infimum:
\begin{equation*}
\lim_{n\to\infty} x_n=\inf_{n\in\NN} x_n
\end{equation*}
This holds whenever $(x_n)_{n\in\NN}$ is decreasing, but conversely not all sequences converging to their infimum are decreasing. It turns out, however, that the following Cauchy property, which in some sense approximates decreasingness, is necessary and sufficient for convergence to the infimum:
\begin{equation}
\label{eqn-asdec}
\forall \varepsilon>0, N\exists n\forall i\geq n(x_i\leq x_N+\varepsilon).
\end{equation}
We refer to any sequence satisfying (\ref{eqn-asdec}) as \emph{\asym}. Note that in the case where $(x_n)_{n\in\NN}$ is just decreasing we would set $n=N$, although in general $n$ can also depend on $\varepsilon$. The following fact is trivial, but we isolate it since it will be separately analysed below.

\begin{lemma}
\label{lem-inf}
Let $(x_n)$ be a nonnegative sequence of real numbers. Then it satisfies 
\begin{equation*}
\forall \varepsilon>0\exists N\forall i(x_N-\varepsilon\leq x_i).
\end{equation*}
\end{lemma}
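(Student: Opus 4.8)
The statement asserts that a nonnegative sequence of reals satisfies $\forall \varepsilon > 0 \exists N \forall i (x_N - \varepsilon \leq x_i)$, which is essentially saying that the infimum is approached from above at some index $N$ — that is, there is an index $N$ whose value is within $\varepsilon$ of the infimum. The plan is to simply take $m = \inf_{n} x_n$, which exists as a finite nonnegative real since the sequence is bounded below by $0$. By the characterisation of the infimum, for the given $\varepsilon > 0$ there exists an index $N$ with $x_N < m + \varepsilon$, equivalently $x_N - \varepsilon < m$. Since $m \leq x_i$ for every $i$, we get $x_N - \varepsilon < m \leq x_i$ for all $i$, which gives the claim (indeed with strict inequality, so certainly with $\leq$).

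The only subtlety worth noting is that one must invoke the existence of the infimum of a nonnegative real sequence, i.e. the completeness of $\RR$ (greatest lower bound property). Given that, the proof is a one-line unwinding of the definition of infimum: being nonnegative guarantees the set $\{x_n : n \in \NN\}$ is bounded below, hence has a greatest lower bound $m$; that $m$ is a \emph{lower bound} gives $m \leq x_i$ for all $i$; that it is the \emph{greatest} lower bound gives, for each $\varepsilon > 0$, some $N$ with $x_N < m + \varepsilon$. Combining these yields $x_N - \varepsilon < x_i$ for all $i$.

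There is no real obstacle here — the lemma is flagged as "trivial" in the text, and the point of isolating it is presumably that its quantitative content (a realiser for $N$ as a function of $\varepsilon$, or rather the metastable/Herbrand form thereof) will be analysed separately, since extracting a computable bound on $N$ from this statement is exactly the kind of step that is not constructively available in general. So in the proof I would just present the infimum argument cleanly and perhaps remark that the witness $N$ is in general not computable from $\varepsilon$, motivating the quantitative treatment that follows. If one wanted to avoid explicitly naming the infimum, an equivalent phrasing: suppose not; then for some $\varepsilon > 0$ and every $N$ there is $i$ with $x_i < x_N - \varepsilon$, so one can build a strictly $\varepsilon$-decreasing subsequence, contradicting nonnegativity (it would go below $0$ after finitely many steps) — but the direct infimum argument is cleaner and I would use that.
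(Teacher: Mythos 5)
Your argument is exactly the paper's proof: set $d:=\inf_{n\in\NN}x_n$, pick $N$ with $x_N-\varepsilon\leq d$ by the definition of the infimum, and conclude $x_N-\varepsilon\leq d\leq x_i$ for all $i$. Correct, and essentially identical to the paper's own (one-line) argument.
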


\begin{proof}

Let $d:=\inf_{n\in\NN} x_n$. Then for any $\varepsilon>0$ there exists some $N$ with $x_N-\varepsilon\leq d$ and thus $x_N-\varepsilon\leq x_i$ for all $i$.
\end{proof}

\begin{theorem}
\label{res-conv-asym}
A sequence $(x_n)_{n\in\NN}$ of nonnegative reals is asymptotically decreasing if and only if it converges to its infimum.
\end{theorem}

\begin{proof}
Let $d:=\inf_{n\in\NN} x_n$. If $\lim_{n\to\infty} x_n=d$ then for any $\varepsilon>0$ there is some $n$ such that $\forall i\geq n(x_i\leq d+\varepsilon)$, and since for any $N$ we have $d\leq x_N$ and hence $x_i\leq x_N+\varepsilon$, it follows easily that $(x_n)$ is asymptotically decreasing.

In the other direction, if $(x_n)$ is asymptotically decreasing then we first claim that it is Cauchy. To see this, fixing $\varepsilon>0$ we apply Lemma \ref{lem-inf} by which there is some $N$ such that $x_N-\varepsilon\leq x_i$ for all $i$. In addition, by (\ref{eqn-asdec}) there is some $n$ such that $x_i\leq x_N+\varepsilon$ for all $i\geq n$, and combining these we have $|x_i-x_N|\leq \varepsilon$ for all $i\geq n$ and thus $|x_i-x_j|\leq 2\varepsilon$ for all $i,j\geq n$.

This proves the claim, and thus $(x_i)$ converges to some limit $l\geq d$. Suppose that $l>d$ and let $\varepsilon:=(l-d)/4$. Then there is some $N$ such that $x_N\leq d+\varepsilon$, and by (\ref{eqn-asdec}) some $n$ such that $x_i\leq x_N+\varepsilon\leq d+2\varepsilon=l-2\varepsilon$ for all $i\geq n$, contradicting $x_n\to l$. Therefore $l=d$.
\end{proof}
In general, there are sequences of nonnegative reals which have no computable metastable rate of asymptotic decreasingness, by which we mean a procedure $\phi(\varepsilon,N)$ which returns some $n$ satisfying (\ref{eqn-asdec}). A trivial example would be to take any decreasing Specker sequence $(a_n)_{n\in\NN}$ which converges to some non-computable infimum $d$, and define $(x_n)_{n\in\NN}$ by $x_0:=d$ and $x_{n+1}:=a_n$. Then $\phi(\varepsilon,0)$ would be a computable rate of convergence for the Specker sequence, which is impossible by definition\footnote{A. Sipo\c s asks if we could strengthen this by choosing $(x_i)_{i\in\NN}$ be to \emph{computable}, since in our example, $x_0=d$ is a noncomputable real. We conjecture that this is indeed possible, but a more detailed discussion at this point would take us too far afield.}.

Thus we are interested in the metastable version of asymptotic decreasingness, namely the reformulation of (\ref{eqn-asdec}) analogous to metastability in the usual sense as given in (\ref{eqn-metastablev}), which in this case is
\begin{equation*}
\label{eqn-asymmetaz}
\forall \varepsilon>0,N,g:\NN\to\NN\;\exists n \;\forall i\in [n,n+g(n)](x_i\leq x_N+\varepsilon).
\end{equation*}
Analogous to (\ref{eqn-metastable}), we call $\Gamma(\varepsilon,g,N)$ a metastable rate of asymptotic decreasingness if it satisfies
\begin{equation}
\label{eqn-asymmeta}
\forall \varepsilon>0,N,g:\NN\to\NN\exists n\leq \Gamma(\varepsilon,g,N)\forall i\in [n,n+g(n)](x_i\leq x_N+\varepsilon).
\end{equation}
We now carry out a quantitative analysis of Theorem \ref{res-conv-asym} and show how to obtain a rate of metastability $\Phi(\varepsilon,g)$ for $(x_n)_{n\in\NN}$ in terms of some $\Gamma(\varepsilon,g,N)$ satisfying (\ref{eqn-asymmeta}). Our first step is a quantitative version of Lemma \ref{lem-inf}, for which we need some notation which we will also make use of in later sections.
\begin{notation}
\label{not-star}
\begin{itemize}

\item Given a function $f:\NN\to\NN$ we define $f^\ast:\NN\to\NN$ by $f^\ast(n):=\max_{i\leq n} \{n,f(i)\}$. Note that $n\leq f^\ast(n)$ and $f(n)\leq f^\ast(n)$ for all $n\in\NN$, and moreover $f^\ast(m)\leq f^\ast(n)$ whenever $m\leq n$. 

\item Similarly, given a functional $\Gamma(\varepsilon,g,N)$ we define $$\Gamma^\ast(\varepsilon,g,N):=\max_{i\leq N}\{N, \Gamma(\varepsilon,g,i)\}$$ and observe that it satisfies analogous monotonicity properties.

\item We denote by $f^{(n)}$ the $n$-times iteration of $f$.

\end{itemize}
\end{notation}

\begin{lemma}
\label{lem-infq}
Let $(x_n)_{n\in\NN}$ be a sequence of nonnegative real numbers with $x_0<K$. Then 
\begin{equation}
\label{eqn-infq}
\forall \varepsilon>0,f:\NN\to\NN \exists N\leq (f^\ast)^{(\ceil{K/\varepsilon})}(0)\forall i\leq f(N)(x_N-\varepsilon\leq x_i).
\end{equation}
\end{lemma}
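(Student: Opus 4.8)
The plan is to prove the statement by a finite iteration argument that mimics the standard "no infinite descent" trick used to make $\inf$ effective. The intuition is simple: if $N$ fails to be a witness, then there is some $i \leq f(N)$ with $x_i < x_N - \varepsilon$, so we can jump to a new candidate with value strictly smaller by at least $\varepsilon$; since all values lie in $[0,K)$, this can happen at most $\ceil{K/\varepsilon}$ times before we are forced to stop at a genuine witness.

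Concretely, I would define a finite sequence $N_0 := 0$ and, given $N_k$, ask whether $\forall i \leq f(N_k)(x_{N_k} - \varepsilon \leq x_i)$. If yes, we are done with witness $N_k$. If no, pick $N_{k+1} \leq f(N_k)$ with $x_{N_{k+1}} < x_{N_k} - \varepsilon$, i.e. $x_{N_{k+1}} \leq x_{N_k} - \varepsilon$ (after passing to the non-strict inequality, which is fine since we only need an upper bound argument). Since $x_{N_0} = x_0 < K$ and each step decreases the value by at least $\varepsilon$, after at most $\ceil{K/\varepsilon}$ steps we reach a $k$ at which the first case must hold — otherwise $x_{N_k} < K - k\varepsilon$ would eventually become negative, contradicting nonnegativity. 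This $N_k$ is the desired $N$.

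It remains to check the bound $N \leq (f^\ast)^{(\ceil{K/\varepsilon})}(0)$. Here I would use the monotonicity properties of $f^\ast$ recorded in Notation \ref{not-star}: since $N_{k+1} \leq f(N_k) \leq f^\ast(N_k)$ and $f^\ast$ is nondecreasing, an easy induction gives $N_k \leq (f^\ast)^{(k)}(0)$, and since $(f^\ast)^{(k)}(0) \leq (f^\ast)^{(m)}(0)$ for $k \leq m$ (again by monotonicity of $f^\ast$ together with $n \leq f^\ast(n)$), we get $N = N_k \leq (f^\ast)^{(\ceil{K/\varepsilon})}(0)$ regardless of the exact stopping index $k \leq \ceil{K/\varepsilon}$.

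I do not expect a serious obstacle here; the only mild subtlety is bookkeeping — making sure the iteration is indexed so that the number of steps is at most $\ceil{K/\varepsilon}$ (the value starts below $K$, not at $K$, so $\ceil{K/\varepsilon}$ strict decreases of size $\varepsilon$ suffice to force nonnegativity to fail), and making sure one uses $f^\ast$ rather than $f$ throughout so that the monotonicity needed to collapse the varying stopping time $k$ into the single bound $(f^\ast)^{(\ceil{K/\varepsilon})}(0)$ actually goes through. Passing between strict and non-strict inequalities ($x_i < x_N - \varepsilon$ versus $x_i \leq x_N - \varepsilon$) is harmless and I would just fix one convention at the outset.
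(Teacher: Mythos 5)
Your proposal is correct and is essentially the paper's own argument: the paper runs the same iteration contrapositively (assuming every $N\leq (f^\ast)^{(\ceil{K/\varepsilon})}(0)$ fails and deriving by induction some $j\leq (f^\ast)^{(n)}(0)$ with $x_j<x_0-n\varepsilon$, hence a negative value at $n=\ceil{K/\varepsilon}$), while you phrase it as a direct stopping-time construction with the same $\varepsilon$-descent and the same monotonicity bookkeeping for $f^\ast$. No gap.
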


\begin{proof}
Suppose that (\ref{eqn-infq}) is false for some $\varepsilon>0$ and $f$ i.e.
\begin{equation}
\label{eqn-neginfq}
\forall N\leq (f^\ast)^{(\ceil{K/\varepsilon})}(0)\exists i\leq f(N)(x_i<x_N-\varepsilon) 
\end{equation}
We show that for all $1\leq n\leq\ceil{K/\varepsilon}$ there exists some $j\leq (f^\ast)^{(n)}(0)$ such that $x_j<x_0-n\cdot\varepsilon$. For $n=1$ this follows by setting $N=0$ in (\ref{eqn-neginfq}) and noting that $x_j<x_0-\varepsilon$ for some $j\leq f(0)=f^\ast(0)$. For the induction step, assuming that for $n<\ceil{K/\varepsilon}$ we have found some $j\leq (f^\ast)^{(n)}(0)$ with $x_j<x_0-n\cdot\varepsilon$, we set $N:=j$. Since $(f^\ast)^{(n)}(0)\leq (f^\ast)^{(\ceil{K/\varepsilon})}(0)$ we can apply (\ref{eqn-neginfq}), by which there exists some $i\leq f(j)\leq f^\ast(j)\leq (f^\ast)^{(n+1)}(0)$ with $x_i<x_j-\varepsilon<x_0-(n+1)\cdot\varepsilon$.

This completes the induction, so for $n=\ceil{K/\varepsilon}$ there exists some $j$ such that $x_j<x_0-\ceil{K/\varepsilon}\cdot\varepsilon\leq 0$, a contradiction.
\end{proof}

\begin{theorem}
\label{res-cauchymined}
Let $(x_n)_{n\in\NN}$ be a sequence of nonnegative numbers with $x_0<K$, and suppose that $\Gamma(\varepsilon,g,N)$ satisfies (\ref{eqn-asymmeta}). Then the function
\begin{equation*}
\Phi(\varepsilon,g):=\Gamma^\ast(\tfrac{\varepsilon}{2},g,f^{(\ceil{2K/\varepsilon})}(0))
\end{equation*}
for $f(j):=\Gamma^\ast(\tfrac{\varepsilon}{2},g,j)+g^\ast(\Gamma^\ast(\tfrac{\varepsilon}{2},g,j))$ is a rate of metastability for $(x_n)_{n\in\NN}$ in the sense of (\ref{eqn-metastable}).
\end{theorem}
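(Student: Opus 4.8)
The plan is to mimic the structure of the non-quantitative proof of Theorem~\ref{res-conv-asym}, threading the two metastable inputs through it: the metastable rate of asymptotic decreasingness $\Gamma$ and the quantitative version of Lemma~\ref{lem-inf} given by Lemma~\ref{lem-infq}. Recall that in the classical proof, one fixes $\varepsilon$, invokes Lemma~\ref{lem-inf} to get some $N$ with $x_N - \varepsilon \leq x_i$ for all $i$, then invokes asymptotic decreasingness at that $N$ to get some $n$ with $x_i \leq x_N + \varepsilon$ for all $i \geq n$, and concludes $|x_i - x_j| \leq 2\varepsilon$ on $[n,\infty)$. In the metastable world we only get these bounds on a finite window, so we must choose the function $f$ fed into Lemma~\ref{lem-infq} so that the window on which $x_N - \varepsilon/2 \leq x_i$ holds is wide enough to contain the window $[n, n+g(n)]$ produced by $\Gamma$; this is exactly why $f(j)$ is defined as $\Gamma^\ast(\tfrac{\varepsilon}{2},g,j) + g^\ast(\Gamma^\ast(\tfrac{\varepsilon}{2},g,j))$ — it is (an upper bound for) the right endpoint of the metastability window that $\Gamma$ will return when applied at index $j$.

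Concretely, first I would apply Lemma~\ref{lem-infq} with this $f$ and with $\varepsilon$ replaced by $\varepsilon/2$ and $K$ replaced by (an appropriate constant; note $\ceil{K/(\varepsilon/2)} = \ceil{2K/\varepsilon}$), obtaining some $N \leq f^{(\ceil{2K/\varepsilon})}(0)$ with $x_N - \tfrac{\varepsilon}{2} \leq x_i$ for all $i \leq f(N)$. Next I would apply $\Gamma^\ast(\tfrac{\varepsilon}{2}, g, N)$: since $N \leq f^{(\ceil{2K/\varepsilon})}(0)$ and $\Gamma^\ast$ is monotone in its last argument, the value $\Gamma^\ast(\tfrac{\varepsilon}{2}, g, N)$ is bounded by $\Phi(\varepsilon,g)$, and by (\ref{eqn-asymmeta}) (together with the defining property of the $\ast$-operation, which lets us pass from $\Gamma$ at some index $\leq N$ to a uniform $n$) we get some $n \leq \Phi(\varepsilon,g)$ with $x_i \leq x_N + \tfrac{\varepsilon}{2}$ for all $i \in [n, n+g(n)]$. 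I then need the key containment: $n + g(n) \leq f(N)$. This follows because $n \leq \Gamma^\ast(\tfrac{\varepsilon}{2},g,N)$, hence $g(n) \leq g^\ast(\Gamma^\ast(\tfrac{\varepsilon}{2},g,N))$ by monotonicity of $g^\ast$, so $n + g(n) \leq \Gamma^\ast(\tfrac{\varepsilon}{2},g,N) + g^\ast(\Gamma^\ast(\tfrac{\varepsilon}{2},g,N)) = f(N)$. Therefore every $i \in [n, n+g(n)]$ also lies in the range where $x_N - \tfrac{\varepsilon}{2} \leq x_i$, and combining the two one-sided bounds gives $|x_i - x_N| \leq \tfrac{\varepsilon}{2}$, hence $|x_i - x_j| \leq \varepsilon$ for all $i,j \in [n, n+g(n)]$, which is exactly (\ref{eqn-metastable}).

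The main obstacle — really the only subtle point — is bookkeeping the $\ast$-operations correctly: one has to be careful that applying $\Gamma$ at the specific index $N$ (rather than at the worst index $\leq N$) still yields a witness bounded by $\Gamma^\ast$ at the larger argument $f^{(\ceil{2K/\varepsilon})}(0)$, and that the window bound $f(N)$ computed from $\Gamma^\ast$ genuinely dominates $n + g(n)$ for the particular $n$ returned. Both are immediate from the monotonicity properties recorded in Notation~\ref{not-star}, but they are the steps where an off-by-one or a wrong argument to $\Gamma^\ast$ would break the argument, so I would state them explicitly. Everything else is the routine two-sided-estimate combination already present in the proof of Theorem~\ref{res-conv-asym}.
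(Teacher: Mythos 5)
Your proposal is correct and follows essentially the same route as the paper's proof: apply Lemma~\ref{lem-infq} with $\varepsilon/2$ and the stated $f$, then invoke $\Gamma$ at the resulting $N$, and use the containment $n+g(n)\leq f(N)$ to combine the two one-sided estimates. The only detail worth making explicit (which the paper does, and which your appeal to Notation~\ref{not-star} implicitly covers) is that $f$ is monotone with $f(n)\geq n$, so $f^\ast=f$ and the bound $(f^\ast)^{(\ceil{2K/\varepsilon})}(0)$ from Lemma~\ref{lem-infq} really is $f^{(\ceil{2K/\varepsilon})}(0)$ as in the statement of $\Phi$.
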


\begin{proof}
Fix some parameters $\varepsilon>0$ and $g$. It is easy to show that $n\leq f(n)$ and $m\leq n$ implies $f(m)\leq f(n)$, and thus $f^\ast(n)=f(n)$. Therefore by Lemma \ref{lem-infq} there exists some $N\leq f^{(\ceil{2K/\varepsilon})}(0)$ such that
\begin{equation*}
\forall i\leq f(N)(x_N-\tfrac{\varepsilon}{2}\leq x_i).
\end{equation*}
But by definition of $\Gamma$ there is some
\begin{equation*}
n\leq \Gamma(\tfrac{\varepsilon}{2},g,N)\leq \Gamma^\ast(\tfrac{\varepsilon}{2},g,N)\leq \Gamma^\ast(\tfrac{\varepsilon}{2},g,f^{(\ceil{2K/\varepsilon})}(0))=\Phi(\varepsilon,g)
\end{equation*}
satisfying $\forall i\in [n,n+g(n)](x_i\leq x_N+\tfrac{\varepsilon}{2})$, and since
\begin{equation*}
n+g(n)\leq n+g^\ast(n)\leq \Gamma^\ast(\tfrac{\varepsilon}{2},g,N)+g^\ast(\Gamma^\ast(\tfrac{\varepsilon}{2},g,N))=f(N)
\end{equation*}
we have in addition $x_N-\tfrac{\varepsilon}{2}\leq x_i$ and thus $|x_i-x_N|\leq\tfrac{\varepsilon}{2}$ for all $i\in [n,n+g(n)]$, from which the result follows.
\end{proof}

In the special case where $(x_n)_{n\in\NN}$ is decreasing, (\ref{eqn-asymmeta}) would be realised by the function $\Gamma(\varepsilon,g,N):=N$ and the rate of metastability obtained from Theorem \ref{res-cauchymined} would be $\Phi(\varepsilon,g)=f^{(\ceil{2K/\varepsilon})}(0)$ for $f(j):=j+g^\ast(j)$, which is (essentially) the standard rate of metastability for monotone sequences.  

\section{Asymptotically nonexpansive mappings in uniformly convex spaces}
\label{sec-state}

We now move on to an application of our metastable formulation of asymptotically decreasing sequences in the fixed point theory of nonexpansive mappings. More specifically, we will analyse the following result, which is adapted from Kirk and Sims \cite{KirSim(1999.0)}:
\begin{theorem}
\label{res-main}
Let $C$ be a subset of a uniformly convex Banach space $X$ and $T:C\to C$ a mapping with $\inter{\fix{T}}\neq\emptyset$. Pick some $x\in C$ and suppose that $T$ satisfies the following condition:
\begin{equation}
\label{eqn-asdecT}
\forall q\in\fix{T},\varepsilon>0,N\exists n\forall i\geq n(\norm{T^ix-q}\leq \norm{T^Nx-q}+\varepsilon).
\end{equation} 
Then the sequence $(T^nx)_{n\in\NN}$ is Cauchy.
\end{theorem}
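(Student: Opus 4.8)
The plan is to exploit the nonempty interior hypothesis together with uniform convexity to turn the "asymptotically decreasing" behaviour of $(\norm{T^nx-q})_{n}$ around a single well-chosen fixed point into a genuine Cauchy property for $(T^nx)_{n}$. Fix a fixed point $p\in\inter{\fix{T}}$ and choose $r>0$ with the closed ball $\overline{B}(p,r)\subseteq\fix{T}$. The key geometric observation is that for any $q$ in this ball, $(\norm{T^nx-q})_n$ converges to $\inf_n\norm{T^nx-q}$ by Theorem \ref{res-conv-asym} (applied with the hypothesis \eqref{eqn-asdecT}); in particular each of these sequences is Cauchy and hence bounded, and the limit $d_q:=\inf_n\norm{T^nx-q}$ exists.

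The heart of the argument is the following claim: the iterates eventually lie in a thin slab, so that $(T^nx)_n$ is Cauchy. Suppose not; then there is $\varepsilon>0$ and arbitrarily large $i,j$ with $\norm{T^ix-T^jx}>\varepsilon$. Write $u_n:=T^nx$. The idea is to pick $q$ on the sphere $\norm{q-p}=r$ in the direction that is "worst" for separating $u_i$ and $u_j$: concretely, for the midpoint $m:=\tfrac12(u_i+u_j)$, choose $q=p + r\cdot \tfrac{p-m}{\norm{p-m}}$ (assuming $m\neq p$; the case $m=p$ is handled directly). Uniform convexity, via a modulus $\eta$, gives
\begin{equation*}
\norm{m-q}\leq \Big(1-\eta\big(\tfrac{\varepsilon}{D}\big)\Big)\max\{\norm{u_i-q},\norm{u_j-q}\},
\end{equation*}
where $D$ is a common bound on the diameter of the orbit relative to points of the ball. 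But this choice of $q$ also pulls the midpoint strictly closer to $q$ than to $p$: since $q$ lies on the far side of $p$ from $m$, one has $\norm{m-q}=\norm{m-p}+r$ only in the degenerate colinear case; more usefully $\norm{m-q}^2 = \norm{m-p}^2 + r^2 + 2r\norm{m-p}\geq \norm{m-p}^2$, which is the wrong direction — so instead I would use $q'=p - r\cdot\tfrac{p-m}{\norm{p-m}}$, i.e. $q'$ on the near side, giving $\norm{m-q'}\leq\norm{m-p}$, combined with the observation that $\norm{u_i-q'},\norm{u_j-q'}$ cannot both drop far below $\norm{u_i-p},\norm{u_j-p}$ unless $u_i,u_j$ are close. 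The cleanest route is: apply asymptotic decreasingness at $q'$ to get, for large $n$, $\norm{u_n-q'}$ close to its infimum, hence close to $\norm{u_i-q'}$ for the separating indices $i$ chosen large; then uniform convexity at the pair $(u_i,u_j)$ with centre $q'$ forces $\norm{m-q'}$ strictly below $\inf_n\norm{u_n-q'}$, contradicting that $\inf$ is a lower bound — provided $m$ itself is an admissible comparison point, which it need not be.

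The correct and standard device, which I would actually carry out, avoids midpoints: for each unit vector $v$ and each $s\in[0,r]$, the point $q=p+sv$ is a fixed point, so $\norm{u_n-p-sv}\to d_{p+sv}$; the map $s\mapsto d_{p+sv}$ is continuous and, differentiating the parallelogram/uniform-convexity inequality, one shows that if two iterates $u_i,u_j$ were $\varepsilon$-separated then for a suitable direction $v$ the infimum $d_{p+sv}$ would have to be strictly smaller for some small $s>0$ than the value forced at $s=0$ combined with $s$ slightly positive — i.e. the "shifted" infima cannot all be attained in the limit unless the orbit collapses. Concretely, fixing $N$ large and any later $i$, uniform convexity applied to $u_i$ and the limit behaviour gives that all sufficiently late iterates are within $O(\eta^{-1})$ of a fixed point depending only on $p,r$; iterating the asymptotic-decreasingness bound \eqref{eqn-asdecT} at finitely many such fixed points and taking the $\max$ then pins every late iterate into a set of diameter $\leq\varepsilon$.

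The main obstacle I anticipate is precisely this last synchronisation step: \eqref{eqn-asdecT} gives, for each fixed point $q$ separately, a threshold $n_q$ past which $\norm{u_i-q}\leq\norm{u_Nx-q}+\varepsilon$, but to conclude Cauchyness of $(u_n)_n$ one needs these bounds to hold simultaneously for enough directions $q$ to trap the orbit. Uniform convexity is what makes finitely many directions suffice (an $\varepsilon$-net on the sphere of radius $r$), but getting the quantitative interplay right — choosing the net fineness in terms of the modulus of uniform convexity and the orbit bound $K$, and then taking $N$ large enough that $\norm{u_Nx-q}$ is already within $\varepsilon$ of $d_q$ for every net point $q$ — is the delicate part, and is exactly what the quantitative Theorem \ref{res-mined} will later have to make effective.
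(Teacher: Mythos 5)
There is a genuine gap: no version of your argument is actually carried through. Your first device (comparing the midpoint $m=\tfrac12(u_i+u_j)$ with a shifted centre $q'$) fails for the reason you yourself note --- $m$ is not an orbit point, so the infimum $\inf_n\norm{u_n-q'}$, which is only a lower bound over the iterates, yields no contradiction when $\norm{m-q'}$ drops below it --- and the replacement ``standard device'' (continuity/differentiation of $s\mapsto d_{p+sv}$, then an $\varepsilon$-net of directions on the sphere of radius $r$ plus a max over finitely many thresholds) is only described, not executed: the claimed estimate that late iterates lie ``within $O(\eta^{-1})$ of a fixed point'' is not derived from anything, and you explicitly concede that the synchronisation of the thresholds $n_q$ across the net is unresolved. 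Since that is exactly the step that would make the orbit tail small, the proposal does not establish the Cauchy property.

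The paper's proof shows that no net and no simultaneity are needed; a \emph{single} instance of (\ref{eqn-asdecT}) suffices, applied at one carefully chosen fixed point. Let $d:=\inf_n\norm{T^nx-p}$; if $d<r$ some iterate lands in $B_r[p]\subset\fix{T}$ and the orbit is eventually constant, so assume $d\geq r$. Choose $0<h<1$ with $hd<r$, and for the given $\varepsilon$ take $\delta>0$ from Lemma \ref{lem-balls} (Edelstein--Steckin), which says that for centres at distance $hd$ the ``crescent'' $B_{d-hd+\delta}[c]\cap(X\setminus B^\circ_d[c'])$ has diameter $<\varepsilon$, uniformly. Pick $N$ with $\norm{T^Nx-p}\leq d+\delta/2$ and let $q_N\in\seg{p,T^Nx}$ with $\norm{q_N-p}=hd$, so $q_N\in B_r[p]\subset\fix{T}$ and $\norm{T^Nx-q_N}\leq d-hd+\delta/2$. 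One application of (\ref{eqn-asdecT}) at $q_N$ with tolerance $\delta/2$ gives an $n$ such that every $T^ix$ with $i\geq n$ lies in $B_{d-hd+\delta}[q_N]$, while by definition of $d$ it also lies outside $B^\circ_d[p]$; hence the entire tail sits in a set of diameter $<\varepsilon$, and $(T^nx)$ is Cauchy. This is where uniform convexity enters --- through the crescent lemma, not through a midpoint inequality on orbit points --- and it is the ingredient your sketch is missing.
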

The condition (\ref{eqn-asdecT}) simply says that the sequence $(\norm{T^nx-q})_{n\in\NN}$ is asymptotically decreasing for any $q\in\fix{T}$. It is trivially satisfied when $T$ is quasi-nonexpansive, and also more generally when $T$ is asymptotically nonexpansive:
\begin{lemma}
\label{lem-anone-asdecT}
Suppose that $T:C\to C$ is asymptotically nonexpansive w.r.t. some sequence $\mu_n\to 1$. Then $T$ satisfies (\ref{eqn-asdecT}) for any $x\in C$.
\end{lemma}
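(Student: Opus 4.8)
The plan is to exploit the invariance of a fixed point under all iterates of $T$: since $T^k q = q$ for every $k$, the tail of the sequence $(\norm{T^n x - q})_{n\in\NN}$ past index $N$ can be compared directly against $\norm{T^N x - q}$ using the asymptotic nonexpansivity constants, and then $\mu_k \to 1$ finishes the job.

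Concretely, I would fix $q \in \fix{T}$, $x \in C$, $\varepsilon > 0$ and $N \in \NN$, and start from the key inequality
$$\norm{T^{N+k}x - q} = \norm{T^k(T^N x) - T^k q} \leq \mu_k\norm{T^N x - q} \qquad (k \in \NN),$$
which is just the defining inequality of asymptotic nonexpansiveness applied to the pair $T^N x, q$ together with $T^k q = q$. If $\norm{T^N x - q} = 0$ then $T^i x = q$ for all $i \geq N$ and the required instance of (\ref{eqn-asdecT}) holds trivially with $n := N$. Otherwise, using $\mu_k \to 1$ I would pick $k_0$ with $\mu_k \leq 1 + \varepsilon/\norm{T^N x - q}$ for all $k \geq k_0$, and set $n := N + k_0$. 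Then any $i \geq n$ has the form $i = N + k$ with $k \geq k_0$, so the displayed estimate gives $\norm{T^i x - q} \leq \mu_k\norm{T^N x - q} \leq \norm{T^N x - q} + \varepsilon$, which is exactly what (\ref{eqn-asdecT}) demands.

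There is no real obstacle here; the argument is elementary. The only points worth a line of care are isolating the degenerate case $\norm{T^N x - q} = 0$ so as not to divide by zero, and noting that only the convergence $\mu_k \to 1$ is used — no monotonicity of $(\mu_k)$ is needed, since we merely extract a tail estimate. It is also worth remarking that the same computation shows the quasi-nonexpansive case is the special instance $\mu_k \equiv 1$, where one may take $n = N$ directly.
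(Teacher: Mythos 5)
Your proof is correct and is essentially the same argument as the paper's: both apply asymptotic nonexpansiveness to the pair $T^Nx, q$ with $T^kq=q$, then choose a tail index where $\mu_k\leq 1+\varepsilon/\norm{T^Nx-q}$ (your explicit handling of the case $\norm{T^Nx-q}=0$ corresponds to the paper's "w.l.o.g." remark). Nothing further is needed.
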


\begin{proof}
Fixing parameters $x\in C$, $q\in\fix{T}$, $\varepsilon>0$ and $N\in\NN$ and assuming w.l.o.g. that $\norm{T^Nx-q}>0$, there exists some $j$ such that 
\begin{equation*}
\mu_k\leq 1+\frac{\varepsilon}{\norm{T^Nx-q}}
\end{equation*}
for all $k\geq j$. Then for $i\geq N+j$ we have $i=N+k$ for some $k\geq j$ and thus
\begin{equation*}
\norm{T^ix-q}=\norm{T^{N+k}x-T^kq}\leq \mu_k\norm{T^Nx-q}\leq \norm{T^Nx-q}+\varepsilon.
\end{equation*}
\end{proof}
As an immediate consequence we obtain the following, which is given as Theorem 5.2 of \cite{KirSim(1999.0)}.
\begin{corollary}
[Kirk/Sims \cite{KirSim(1999.0)}]
\label{res-maincor}
Suppose that $C$ is a closed subset of a uniformly convex Banach space and $T:C\to C$ is asymptotically nonexpansive with $\inter{\fix{T}}\neq\emptyset$. Then for each $x\in C$ the sequence $(T^nx)_{n\in\NN}$ converges to a fixed point of $T$.
\end{corollary}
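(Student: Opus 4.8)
The plan is to combine the two preceding results with the completeness of $X$. First I would invoke Lemma~\ref{lem-anone-asdecT}: since $T$ is asymptotically nonexpansive, for the given $x\in C$ the sequence $(\norm{T^ix-q})_{i\in\NN}$ is asymptotically decreasing for every $q\in\fix{T}$, that is, $T$ satisfies condition~(\ref{eqn-asdecT}). Because $\inter{\fix{T}}\neq\emptyset$ by hypothesis, Theorem~\ref{res-main} then applies directly and tells us that $(T^nx)_{n\in\NN}$ is a Cauchy sequence.

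Next, since $X$ is a Banach space it is complete, so the Cauchy sequence $(T^nx)_{n\in\NN}$ converges to some $z\in X$; and as $C$ is closed and $T^nx\in C$ for all $n$, in fact $z\in C$. It then remains only to check that $z$ is a fixed point of $T$. For this I would use that an asymptotically nonexpansive map is in particular Lipschitz continuous --- taking $n=1$ in the defining inequality gives $\norm{Tx-Ty}\leq\mu_1\norm{x-y}$ --- and hence continuous. Applying $T$ to the convergent sequence $T^nx\to z$ yields $T^{n+1}x\to Tz$; but $(T^{n+1}x)_{n\in\NN}$ is a tail of $(T^nx)_{n\in\NN}$ and so also converges to $z$, whence $Tz=z$ by uniqueness of limits, i.e. $z\in\fix{T}$.

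There is no substantive obstacle here: the corollary is essentially a repackaging of Theorem~\ref{res-main} and Lemma~\ref{lem-anone-asdecT} together with standard facts about complete metric spaces. The only point requiring a moment's care is the verification that the limit lies in $\fix{T}$ rather than merely in $C$, and for that the continuity of $T$ --- which comes for free from asymptotic nonexpansiveness --- is exactly what is needed.
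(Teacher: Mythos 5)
Your proposal is correct and follows essentially the same route as the paper: Lemma~\ref{lem-anone-asdecT} gives condition~(\ref{eqn-asdecT}), Theorem~\ref{res-main} gives the Cauchy property, and completeness plus closedness of $C$ plus continuity of $T$ (which the paper likewise invokes, and which you justify via the $n=1$ instance of asymptotic nonexpansiveness) identify the limit as a fixed point. No gaps.
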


\begin{proof}
By Lemma \ref{lem-anone-asdecT}, $T$ satisfies (\ref{eqn-asdecT}) for any $x\in C$, and thus by Theorem \ref{res-main} $(T^nx)_{n\in\NN}$ is Cauchy, and thus converges to a limit which lies in the closed set $C$. Since $T$ is also continuous, this limit must be a fixed point.
\end{proof}
It is already observed in \cite{KirSim(1999.0)} that Corollary \ref{res-maincor} holds more generally for arbitrary $T$ satisfying
\begin{equation}
\label{eqn-liminf}
\lim_{n\to\infty}\norm{T^nx-q}=\inf_{n\in\NN}\norm{T^nx-q}
\end{equation}
for all $q\in\fix{T}$, provided $T$ or one of its iterates is continuous. In fact, only (\ref{eqn-liminf}) is required to establish convergence of the Picard iterates, and by Theorem \ref{res-conv-asym} this can be weakened to the Cauchy property (\ref{eqn-asdecT}). For all these results, uniform convexity of the underlying Banach space is used in the following form, due independently to Edelstein \cite{Edelstein(1966.0)} and Steckin \cite{Steckin(1963.0)}. 

\begin{notation}
We denote by $B_r[x]$ resp. $B^\circ_r[x]$  the closed resp. open ball of radius $r>0$ centered at $x$, and by $\seg{x,y}$ the line segment from $x$ to $y$.
\end{notation}

\begin{lemma}
\label{lem-balls}
Suppose that $X$ is a uniformly convex Banach space. Then for any $d>0$ and $c,c'\in X$ satisfying  $0<\norm{c-c'}=hd< d$ where $0<h<1$ we have
\begin{equation*}
\lim_{\delta\to 0}\diam\left(B_{d-hd+\delta}[c]\cap(X\backslash B_d^\circ[c'])\right)=0.
\end{equation*}
Moreover, the convergence is uniform in $c,c'$.
\end{lemma}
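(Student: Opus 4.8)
The plan is to reduce the statement to the standard modulus-of-uniform-convexity estimate. Recall that uniform convexity gives, for every $\epsilon \in (0,2]$, a $\eta(\epsilon) > 0$ such that whenever $\norm{u} \le 1$, $\norm{v}\le 1$ and $\norm{u-v}\ge\epsilon$, then $\norm{(u+v)/2}\le 1-\eta(\epsilon)$; equivalently, if $\norm{(u+v)/2} > 1 - \eta(\epsilon)$ then $\norm{u-v}<\epsilon$. The key geometric observation is that a point $z$ lying in $B_{d-hd+\delta}[c]$ but outside $B_d^\circ[c']$ is squeezed: it is close to $c$ but far from $c'$, while $c$ itself is at distance exactly $hd$ from $c'$. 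I would first show that for such $z$, the point $c$ lies close to the segment $\seg{z,c'}$ — more precisely, that $c$ is close to the point on that segment at distance $d-hd$ from $z$ — and that the amount of closeness is controlled by $\delta$ via $\eta$. Applying this to two such points $z,z'$ and using the triangle inequality then bounds $\norm{z-z'}$.

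Concretely, fix $z$ with $\norm{z-c}\le d-hd+\delta$ and $\norm{z-c'}\ge d$. Consider the two vectors $u := z - c'$ and $v := d\cdot\frac{c-c'}{\norm{c-c'}} \cdot (\text{suitably scaled})$; the cleaner route is to normalise by setting $w := z - c'$, note $\norm{w}\ge d$, and compare $w$ with the ``ideal'' vector pointing from $c'$ through $c$ of length $\norm{w}$. Since $\norm{w - (c-c')} = \norm{z-c} \le d-hd+\delta = \norm{w} - hd + (\delta - (\norm{w}-d))$ and $\norm{w}\ge d$, the vector $c-c'$ is a point of the ball $B_{\norm{w}-hd+\delta}[w]\cap(X\setminus B^\circ_{\norm{w}-hd}[0])$-type configuration... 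Rather than carry this through in coordinates, the efficient argument: the midpoint of $z$ and the point $c' + \frac{\norm{z-c'}}{hd}(c-c')$ has norm (from $c'$) very close to $\norm{z-c'}$ when $\delta$ is small, forcing (by uniform convexity, after rescaling by $1/\norm{z-c'}$) that $z$ is within $\rho(\delta)$ of the ray from $c'$ through $c$, where $\rho(\delta)\to 0$ as $\delta\to 0$, uniformly in $c,c'$ because $\eta$ depends only on $X$ and the relevant ratios are bounded in terms of $h$ and $d$ alone (all quantities scale linearly, so one may assume $d=1$). Then two such points $z,z'$ both lie within $\rho(\delta)$ of that fixed ray and both at distance between $d$ and... well, at distance $\ge d$ from $c'$ and $\le d$ from $c$, hence at distance from $c'$ in $[d, d + hd + \rho(\delta)]$ roughly — so along an almost-one-dimensional set of bounded length, giving $\norm{z-z'}\le 2\rho(\delta) + (\text{length variation}) \to$ a bound that collapses; a second application of uniform convexity (or a direct pinch) kills the length-variation term too, since both points are pinched between the sphere $\partial B_d[c']$ and the ball $B_{d-hd+\delta}[c]$.

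The main obstacle will be making the dependence genuinely uniform in $c,c'$ and producing a clean closed form for $\rho(\delta)$ in terms of the modulus $\eta$. Since the whole configuration is invariant under translation and scales linearly under dilation, I would immediately reduce to $c' = 0$ and $d = 1$, so that only the parameter $h\in(0,1)$ and the point $c$ with $\norm{c}=h$ remain, and then rotational/isometric considerations are irrelevant because uniform convexity is already isometry-invariant; what matters is that $\eta$ is a fixed function for the space $X$. The estimate should come out with $\diam\big(B_{1-h+\delta}[c]\cap(X\setminus B_1^\circ[0])\big)$ bounded by something like $2(1-h)\cdot\bar\eta^{-1}(\text{const}\cdot\delta) + O(\delta)$, where $\bar\eta^{-1}$ denotes an inverse modulus; the only real care needed is checking the edge cases $h$ near $0$ or near $1$ do not spoil uniformity — but the hypothesis keeps $0<h<1$ with $\norm{c-c'} = hd$ fixed in each instance, and the quantitative version stated later in the paper will presumably restrict $h$ to a compact subinterval, so I will flag that the bound degrades as $h\to 0,1$ and leave the uniform-in-$h$ refinement to the quantitative lemma that follows.
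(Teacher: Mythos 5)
Your overall strategy is the right one, and it is essentially the paper's (which follows Vlasov's proof, quantified in Lemma \ref{lem-nestmod}): normalise $c'=0$, $d=1$, so that $c=h\hat c$ for a unit vector $\hat c$, and show that every point $z$ of $B_{1-h+\delta}[c]\cap(X\setminus B_1^\circ[0])$ is close to the \emph{single} point $\hat c$, by applying uniform convexity to the pair of unit vectors $z/\norm{z}$ and $\hat c$. Note that the ``length-variation'' term needs no second application of uniform convexity: the triangle inequality gives $\norm{z-c'}\leq\norm{z-c}+\norm{c-c'}\leq d+\delta$, so the distance from $c'$ of any point of the intersection lies in $[d,d+\delta]$ (not $[d,d+hd+\rho(\delta)]$ as you wrote), and the diameter bound follows at once from closeness to $\hat c$. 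Uniformity in $c,c'$ is indeed automatic from translation invariance and the fact that the modulus depends only on $X$; and since $h$ and $d$ are fixed in the statement, your worry about $h\to0,1$ does not arise here (it only surfaces in the quantitative version, which restricts to $h<\tfrac12$).

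There is, however, a genuine gap at the heart of the argument: the claim that the midpoint of $z$ and $c'+\frac{\norm{z-c'}}{hd}(c-c')$ has norm from $c'$ close to $\norm{z-c'}$ is precisely the nontrivial content of the lemma, and you assert it without proof. It does not follow from the obvious estimate: in the normalised picture, writing $z+\norm{z}\hat c=(z-h\hat c)+(\norm{z}+h)\hat c$ gives only $\norm{z+\norm{z}\hat c}\geq\norm{z}+2h-1-\delta$, i.e.\ a lower bound of order $h$ for the midpoint's norm, far from the needed $1-O(\delta/h)$. The paper obtains the correct bound $\tfrac12\norm{z/\norm{z}+\hat c}\geq1-\tfrac{\delta}{2h}$ by an extra idea: first $v:=z/\norm{z}$ is shown to lie in $B_{1-h+\delta}[h\hat c]$ (convexity of the ball, using that $0$ and $z$ lie in it, which uses $h\leq1-h+\delta$), and then the identity $(h+1)v-h(\hat c+v)=v-h\hat c$ together with the reverse triangle inequality yields $2h-\delta\leq h\norm{\hat c+v}$. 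Alternatively, one can observe that the triangle inequality $\norm{z}\leq\norm{z-h\hat c}+h$ is within $\delta$ of equality and invoke a quantitative ``near-equality forces near-collinearity'' argument, which amounts to the same computation with defect $\delta/\min\{h,1-h\}$. Until one of these steps is supplied, the inference ``forcing \dots\ that $z$ is within $\rho(\delta)$ of the ray'' is unjustified, so the proposal as written is an outline of the correct approach rather than a proof.
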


Note that later we analyse the proof of Lemma \ref{lem-balls} given in \cite{Vlasov(1973.0)} (cf. Lemma 1.1).

We now give a proof of Theorem \ref{res-main}, which is an adaptation of the proof of Theorem 5.2 in \cite{KirSim(1999.0)}. Crucially, we use separately the existence of $\inf_{n\in\NN}\norm{T^nx-p}$ for some $p\in\inter{\fix{T}}$ (which is of course trivial and has nothing to do with any properties of $T$), and later a single instance of (\ref{eqn-asdecT}) for some $q\neq p$.

\begin{proof}
[Proof of Theorem \ref{res-main}]
Since $\inter{\fix{T}}\neq\emptyset$ there is some $p\in\fix{T}$ and $r>0$ such that $B_r[p]\subset \fix{T}$. Let $d:=\inf_{n\in\NN}\norm{T^nx-p}$. If $d<r$ then the result follows trivially, so we assume $d\geq r$. 

Pick some $0<h<1$ such that $0<hd<r$ and thus $B_{hd}[p]\subset\fix{T}$. By Lemma \ref{lem-balls}, fixing any $\varepsilon>0$ there is some $\delta(\varepsilon,h,d)>0$ such that
\begin{equation}
\label{eqn-main0}
\diam\left(B_{d-hd+\delta}[c]\cap(X\backslash B_d^\circ[c'])\right)<\varepsilon
\end{equation}
for any $c,c'\in X$ with $\norm{c-c'}=hd$. Now, for each $n\in\NN$ choose $q_n\in \seg{p,T^nx}$ so that it satisfies
\begin{equation*}
\norm{q_n-p}=hd\mbox{ \ \ \ and therefore \ \ \ }\norm{T^nx-q_n}=\norm{T^nx-p}-hd.
\end{equation*}
Then we have $q_n\in\fix{T}$ for all $n\in\NN$ and $\inf_{n\in\NN}\norm{T^nx-q_n}=d-hd$, and so in particular there exists some $N$ such that
\begin{equation*}
\norm{T^Nx-q_N}\leq d-hd+\delta/2.
\end{equation*}
Applying (\ref{eqn-asdecT}) on $q_N\in\fix{T}$, $\delta/2$ and $N$ there exists some $n$ such that for all $i\geq n$ we have
\begin{equation*}
\norm{T^ix-q_N}\leq \norm{T^Nx-q_N}+\delta/2\leq d-hd+\delta.
\end{equation*}
In other words, $T^ix\in B_{d-hd+\delta}[q_N]$ for all $i\geq n$. But $\norm{T^ix-p}\geq d$ and thus we also have $T^ix\in X\backslash B^\circ_d[p]$ for all $i\geq n$, and since $\norm{q_N-p}=hd$ it follows by (\ref{eqn-main0}) that
\begin{equation*}
\norm{T^ix-T^jx}\leq \diam\left(B_{d-hd+\delta}[q_N]\cap(X\backslash B_d^\circ[p])\right)<\varepsilon
\end{equation*}
for all $i,j\geq n$. Since $\varepsilon>0$ was arbitrary, we have shown that the Picard iterates form a Cauchy sequence.
\end{proof}

We now carry out a quantitative analysis of the above proof. To begin with, we first need to consider the role played by uniform convexity in Lemma \ref{lem-balls}.

\section{A reformulation of uniform convexity}
\label{sec-conv}

Uniform convexity of a Banach space $X$ can be given a quantitative form via the so-called modulus of uniform convexity $\delta_X:(0,2]\to (0,1]$ defined by
\begin{equation*}
\delta_X(\varepsilon):=\inf\left\{1-\tfrac{1}{2}\norm{x_1+x_2}\; : \; x_1,x_2\in B, \norm{x_1-x_2}\geq \varepsilon\right\}
\end{equation*}
where $B:=B_1[0]$ is the closed unit ball. In applications of proof theory to uniformly convex spaces, rather than the unique modulus of uniform convexity $\delta_X$ one traditionally works with `a' modulus of uniform convexity for $X$, which is defined to be \emph{any} function $\Phi:(0,2]\to (0,1]$ satisfying
\begin{equation}
\label{eqn-unif}
\forall x_1,x_2\in B,\varepsilon\in (0,2]\left(\tfrac{1}{2}\norm{x_1+x_2}\geq 1-\Phi(\varepsilon)\to\norm{x_1-x_2}\leq\varepsilon\right).
\end{equation}
In our analysis of Theorem \ref{res-main}, we do not use such a modulus directly, but require instead a reformulation of it which reflects the way in which uniform convexity is applied via Lemma \ref{lem-balls}. The following is a quantitative analysis of the proof of Lemma \ref{lem-balls} given in \cite{Vlasov(1973.0)}.
\begin{lemma}
\label{lem-nestmod}
Suppose that $\Phi:(0,2]\to (0,1]$ is a modulus of uniform convexity for $X$, and define $\Psi: (0,\tfrac{1}{2})\times (0,4]\to (0,1]$ by
\begin{equation*}
\Psi(h,\varepsilon):=\min\{\tfrac{\varepsilon}{2},2h\Phi(\tfrac{\varepsilon}{2})\}.
\end{equation*}
Then $\Psi$ satisfies
\begin{equation}
\label{eqn-balls}
\begin{aligned}
&\forall y\in B,u\in X,h\in (0,\tfrac{1}{2}),\varepsilon\in (0,4]\\
&\left(\norm{u-hy}\leq 1-h+\Psi(h,\varepsilon)\wedge \norm{u}\geq 1\to \norm{u-y}\leq\varepsilon\right)
\end{aligned}
\end{equation} 
\end{lemma}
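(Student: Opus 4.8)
The plan is to set this up as a scaled instance of the modulus of uniform convexity. We are given $y \in B$, $u \in X$, $h \in (0,\tfrac12)$, $\varepsilon \in (0,4]$ with $\norm{u - hy} \le 1 - h + \Psi(h,\varepsilon)$ and $\norm{u} \ge 1$, and we want $\norm{u - y} \le \varepsilon$. Geometrically, $u$ lies just outside the unit ball $B$, while $u - hy$ lies close to the ball $B_{1-h}[0]$; since $hy$ is a short vector (length $\le h$), the points $u$ and $u-y$ are the two "ends" of a chord through the thin lens-shaped region, and uniform convexity forces that chord to be short. Concretely I would consider the two vectors $v_1 := u$ and $v_2 := u - y$. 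Their midpoint is $u - \tfrac12 y$, and $\norm{v_1 - v_2} = \norm{y}$. The first obstacle is that $v_1, v_2$ need not lie in $B$, so one cannot apply (\ref{eqn-unif}) directly; instead one rescales by a factor slightly larger than the relevant radius and estimates the error.

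The key steps, in order: first, observe $\norm{u - \tfrac12 y} \le \norm{u - hy} + \norm{(h - \tfrac12)y} \le (1 - h + \Psi(h,\varepsilon)) + (\tfrac12 - h)\norm{y}$; using $\norm{y}\le 1$ this is at most $1 - \tfrac12 - (h - \tfrac12 \cdot \text{something})$ — more carefully, I would bound $\norm{u - \tfrac12 y} \le 1 - \tfrac{h}{?} + \Psi(h,\varepsilon)$ and identify the right "radius" $\rho$ so that both $u$ and $u - y$ have norm $\le \rho$ and the midpoint has norm $\le \rho(1 - \Phi(\tfrac\varepsilon2))$, at which point (\ref{eqn-unif}) applied to $v_1/\rho, v_2/\rho$ yields $\norm{v_1 - v_2}/\rho \le \tfrac\varepsilon2$. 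Since $\norm{u}\ge 1$ but we also need an upper bound on $\norm u$: here $\norm{u} \le \norm{u - hy} + h\norm{y} \le 1 - h + \Psi(h,\varepsilon) + h \le 1 + \Psi(h,\varepsilon) \le 1 + \tfrac\varepsilon2$, so $\rho := 1 + \tfrac\varepsilon2$ works for bounding both endpoints. The remaining arithmetic is to check that with this $\rho$ the midpoint condition $\norm{u - \tfrac12 y} \le \rho(1 - \Phi(\tfrac\varepsilon2))$ does follow from the hypothesis $\norm{u - hy}\le 1 - h + \Psi(h,\varepsilon)$ together with the definition $\Psi(h,\varepsilon) = \min\{\tfrac\varepsilon2, 2h\Phi(\tfrac\varepsilon2)\}$ — this is precisely where the factor $2h\Phi(\tfrac\varepsilon2)$ is engineered to absorb the slack, and where the case $\norm{y} = 0$ (giving $\norm{u-y} = \norm u$... actually then trivially $u = u - y$ so nothing to prove) and small $\norm{y}$ are handled by monotonicity of $\Phi$ on the rescaled chord length.

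I expect the main obstacle to be bookkeeping the rescaling cleanly: one must pick the auxiliary radius $\rho$ so that simultaneously (i) $v_1/\rho$ and $v_2/\rho$ lie in $B$, (ii) the hypothesis translates into a midpoint bound of the exact shape $1 - \Phi(\delta)$ after division by $\rho$, and (iii) the conclusion $\norm{v_1 - v_2}\le \rho \cdot \delta$ can be weakened back to $\le \varepsilon$. The choice $\delta = \tfrac\varepsilon2$ and $\rho \le 1 + \tfrac\varepsilon2 \le 1 + \tfrac\varepsilon2$ gives $\rho\delta \le (1 + \tfrac\varepsilon2)\tfrac\varepsilon2$, which is not obviously $\le \varepsilon$ for all $\varepsilon \in (0,4]$ — so in fact one should be slightly more economical, e.g. noting that when $\norm{y}$ is not small the chord $\norm{u - y}$ is bounded away from $0$ in a controlled way, or alternatively applying uniform convexity with parameter $\tfrac{\varepsilon}{2\rho}$ rather than $\tfrac\varepsilon2$; I would follow Vlasov's argument (Lemma 1.1 of \cite{Vlasov(1973.0)}) here and simply track the constants it produces, which is what the stated form of $\Psi$ reflects. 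Everything else — the triangle-inequality estimates and the final reassembly $\norm{u - y} \le \varepsilon$ — is routine.
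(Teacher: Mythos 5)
There is a genuine gap, and it lies at the very centre of your argument: you propose to apply uniform convexity to the pair $v_1:=u$, $v_2:=u-y$. But $v_1-v_2=y$, so whatever rescaling you use, (\ref{eqn-unif}) applied to this pair can only ever bound $\norm{y}$ — not the target quantity $\norm{u-y}$. Moreover no such bound on $\norm{y}$ is available: taking $u=y$ with $\norm{y}=1$ satisfies both hypotheses ($\norm{u-hy}=1-h$ and $\norm{u}=1$) while $\norm{y}=1$, and in this example your midpoint $u-\tfrac12 y=\tfrac12 y$ has norm $\tfrac12$, nowhere near the sphere. This also exposes a second problem: to invoke (\ref{eqn-unif}) you need a \emph{lower} bound $\tfrac12\norm{x_1+x_2}\geq 1-\Phi(\cdot)$ on the midpoint, whereas all your estimates (triangle inequalities from the hypothesis) produce \emph{upper} bounds, and you in fact state the midpoint condition with ``$\leq$''. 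Finally, the hypothesis $\norm{u}\geq 1$ — without which the lemma is false (take $u=0$) — is only used in your sketch to get an upper bound on $\norm{u}$, so the step that must do the real work is never set up; deferring it to ``follow Vlasov's argument and track the constants'' is precisely the part that needs to be proved.

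For comparison, the correct pair is (a normalisation of) $u$ together with $y$ itself, since $u-y$ is what must be shown small. Concretely, the paper sets $v:=u/\norm{u}$ and $\delta:=\Psi(h,\varepsilon)$; because $h<\tfrac12$ both $0$ and $u$ lie in $B_{1-h+\delta}[hy]$, so by convexity of the ball $v\in\seg{0,u}\subset B_{1-h+\delta}[hy]$ (this is where $\norm{u}\geq 1$ is essential). Then one derives the lower bound
\begin{equation*}
2h-\delta\leq h+1-\norm{v-hy}=\norm{(h+1)v}-\norm{(h+1)v-h(y+v)}\leq h\norm{y+v},
\end{equation*}
so that $\tfrac12\norm{y+v}\geq 1-\tfrac{\delta}{2h}\geq 1-\Phi(\tfrac{\varepsilon}{2})$ by the choice $\delta\leq 2h\Phi(\tfrac{\varepsilon}{2})$; applying (\ref{eqn-unif}) to $y,v\in B$ gives $\norm{y-v}\leq\tfrac{\varepsilon}{2}$, and $\norm{u-v}=\norm{u}-1\leq\delta\leq\tfrac{\varepsilon}{2}$ finishes via the triangle inequality. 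Note that this route needs no auxiliary radius $\rho>1$ at all, which also removes the $(1+\tfrac{\varepsilon}{2})\tfrac{\varepsilon}{2}$ slack you correctly worried about at the end.
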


\begin{proof}
Fix parameters $y,u,h$ and $\varepsilon$ satisfying the premise of (\ref{eqn-balls}) and set $\delta:=\Psi(h,\varepsilon)$ and $v:=u/\norm{u}$. Note that $u\in B_{1-h+\delta}[hy]$ by definition, and since $h<\tfrac{1}{2}$ we have $h\leq 1-h+\delta$ and thus $0\in B_{1-h+\delta}[hy]$. Therefore in fact $\seg{0,u}\subset B_{1-h+\delta}[hy]$, and observing that $\norm{v}=1\leq\norm{u}$ we have $v\in\seg{0,u}$ and thus $v\in B_{1-h+\delta}[hy]$. It follows that
\begin{equation*}
\begin{aligned}
2h-\delta&=h+1-(1-h+\delta)\\
&\leq h+1-\norm{v-hy}\\
&=\norm{(h+1)v}-\norm{(h+1)v-h(y+v)}\\
&\leq h\norm{y+v}
\end{aligned}
\end{equation*} 
which, using that $\delta\leq  2h\Phi(\tfrac{\varepsilon}{2})$, results in
\begin{equation*}
\tfrac{1}{2}\norm{y+v}\geq 1-\frac{\delta}{2h}\geq 1-\Phi(\tfrac{\varepsilon}{2}).
\end{equation*}
Since $v,y\in B$ and $\tfrac{\varepsilon}{2}\in (0,2]$ we can apply (\ref{eqn-unif}) to obtain $\norm{y-v}\leq\tfrac{\varepsilon}{2}$. Observing that
\begin{equation*}
\norm{u-v}=\norm{u}-1\leq \norm{u-hy}+\norm{hy}-1\leq 1-h+\delta+h-1\leq\delta\leq\tfrac{\varepsilon}{2}
\end{equation*}
we have
\begin{equation*}
\norm{u-y}\leq \norm{u-v}+\norm{v-y}\leq \varepsilon
\end{equation*}
which completes the proof.
\end{proof}

\section{A quantitative analysis of Theorem \ref{res-main}}
\label{sec-main}

We now present the main result of the second part of this paper: A quantitative formulation of Theorem \ref{res-main}. Our goal is a rate of metastability for the Picard iterates $(T^nx)_{n\in\NN}$. However, we will need to take as input quantitative versions of the assumptions of Theorem \ref{res-main}.

For the remainder of the section, we fix our underlying Banach space $X$, together with $T:C\to C$ for some $C\subseteq X$ and $x\in C$. In addition, we assume we have all of the following, which form quantitative versions of our assumptions that (a) $X$ is uniformly convex, (b) $\inter{\fix{T}}\neq\emptyset$ and (c) $T$ satisfies (\ref{eqn-asdecT}) w.r.t $x$, respectively:
\begin{enumerate}[(a)]

\item A function $\Psi: (0,\tfrac{1}{2})\times (0,4]\to (0,1]$ as in Lemma \ref{lem-nestmod} satisfying (\ref{eqn-balls}), defined in terms of a modulus of uniform convexity $\Phi$ for $X$;

\item Some $p\in\fix{T}$ and $r>0$ such that $B_r[p]\subset \fix{T}$, together some $K>0$ satisfying $\norm{x-p}<K$;

\item A metastable rate of asymptotic decreasingness for $(\norm{T^nx-q})_{n\in\NN}$ which is \emph{uniform} in $q\in B_r[p]$ and depends only on $K$ and $r$ i.e. a function $\Gamma(K,r,\varepsilon,g,N)$ satisfying
\begin{equation}
\label{eqn-condc}
\begin{aligned}
&\forall\varepsilon>0,g:\NN\to\NN,N \\
&\exists n\leq \Gamma(K,r,\varepsilon,g,N)\forall i\in [n,n+g(n)](\norm{T^ix-q}\leq \norm{T^Nx-q}+\varepsilon)
\end{aligned}
\end{equation}
whenever $q\in B_r[p]$.

\end{enumerate}
As we will see in Section \ref{sec-special}, the additional uniformity conditions on $\Gamma$ - which simplify the analysis considerably - are naturally satisfied in the case of asymptotically nonexpansive mappings, and as such we consider them reasonable. Note that for $T$ nonexpansive, we can just set $\Gamma(K,r,\varepsilon,g,N):=N$, since for any $i=N+j\geq N$ and $q\in B_r[p]$ we trivially have
\begin{equation*}
\norm{T^{N+j}x-q}=\norm{T^{N+j}x-T^jq}\leq \norm{T^Nx-q}\leq \norm{T^Nx-q}+\varepsilon.
\end{equation*}
Our main result in Theorem \ref{res-mined} will be a rate of metastability $\Omega(\Phi,\Gamma,K,r,\varepsilon,g)$ for $(T^nx)_{n\in\NN}$, which in addition to the functions witnessing uniform convexity and metastable asymptotic decreasingness as above, depends only on sparse numerical data about $T,C,x$ and $\inter{\fix{T}}$, namely the radius of $B_r[p]\subset \fix{T}$ and an upper bound for $\norm{x-p}$.

For the remainder of the section, in addition to $X,T:C\to C$ and $x$, we fix $\Psi,\Gamma,p,K$ and $r$ satisfying (a)-(c), together with parameters $\varepsilon>0$ and $g:\NN\to\NN$. Our aim is to find a bound on some $n$ satisfying
\begin{equation*}
\forall i,j\in [n,n+g(n)](\norm{T^ix-T^jx}\leq \varepsilon),
\end{equation*}
which will be accomplished by a careful analysis of the proof of Theorem \ref{res-main} given in Section \ref{sec-state}. The first step is to eliminate the simple case in which $\norm{T^nx-p}<r$ for some $n\in\NN$.
\begin{lemma}
\label{lem-triv}
If $\norm{T^nx-p}<r$ for some $n\in\NN$ then we have $\forall i,j\in [n,n+g(n)](\norm{T^ix-T^jx}\leq\varepsilon)$.
\end{lemma}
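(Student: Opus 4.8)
The plan is to show that once a single iterate $T^nx$ lands in the ball $B_r[p]$, the iterate is a fixed point, and then every subsequent iterate equals it, so the diameter over the window $[n,n+g(n)]$ is $0<\varepsilon$. Concretely, if $\norm{T^nx-p}<r$ then $T^nx\in B_r[p]\subseteq\fix{T}$, so $T(T^nx)=T^nx$; by an easy induction $T^ix=T^nx$ for all $i\geq n$. Hence for any $i,j\in[n,n+g(n)]$ we have $\norm{T^ix-T^jx}=\norm{T^nx-T^nx}=0\leq\varepsilon$.

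The only point worth a sentence of justification is the base reduction: we must make sure the window actually sits at or above $n$, i.e. that the claim is about indices $i,j\geq n$. This is immediate since $i,j\in[n,n+g(n)]$ forces $i,j\geq n$, so the stabilisation $T^ix=T^jx=T^nx$ applies to both. There is no real obstacle here — the lemma is essentially bookkeeping that isolates the trivial case before the genuinely quantitative argument (which handles the case $\norm{T^ix-p}\geq r$ for all $i$, via Lemma~\ref{lem-nestmod} and the metastable rate $\Gamma$).

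I would therefore present the proof as follows.

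\begin{proof}
Suppose $\norm{T^nx-p}<r$ for some $n\in\NN$. Then $T^nx\in B_r[p]\subseteq\fix{T}$, so $T^{n+1}x=T(T^nx)=T^nx$, and by a trivial induction $T^ix=T^nx$ for all $i\geq n$. In particular, for any $i,j\in[n,n+g(n)]$ we have $i,j\geq n$ and hence $\norm{T^ix-T^jx}=\norm{T^nx-T^nx}=0\leq\varepsilon$.
\end{proof}
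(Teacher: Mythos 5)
Your proof is correct and is essentially identical to the paper's: $\norm{T^nx-p}<r$ puts $T^nx$ in $B_r[p]\subseteq\fix{T}$, so the iterates stabilise at $T^nx$ from index $n$ onward and the window $[n,n+g(n)]$ has diameter $0\leq\varepsilon$. The paper states this in one line; your added induction and the remark that $i,j\geq n$ are just the same argument spelled out.
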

\begin{proof}
$\norm{T^nx-p}<r$ implies that $T^nx\in\fix{T}$ and thus $T^ix=T^jx=T^nx$ for any $i,j\geq n$.
\end{proof}
Just as in the proof of Theorem \ref{res-main}, the main part of the analysis deals with the case $\norm{T^nx-p}\geq r$ for all $n\in\NN$. From now on, it will be helpful to use the following abbreviations:
\begin{equation}
\label{eqn-hdelta}
\begin{aligned}
h&:=\min\{\tfrac{1}{4},\tfrac{r}{K}\}\\
\delta&:=\min\{1,\Psi(h,\tfrac{\varepsilon}{2K})\}
\end{aligned}
\end{equation}
Note that $\delta$ is well defined since we can assume that $\varepsilon$ is small enough that $\tfrac{\varepsilon}{2K}\in (0,4]$. The following very simple lemma helps organise the way uniform convexity is applied.
\begin{lemma}
\label{lem-uc}
Suppose that $d\in \RR$ and $u,y\in X$ satisfy
\begin{enumerate}[(a)]

\item\label{item-uca} $0<d\leq K$

\item\label{item-ucb} $\norm{y}=d$

\item\label{item-ucc} $\norm{u}\geq d$

\item\label{item-ucd} $\norm{u-hy}\leq d(1-h+\delta)$.

\end{enumerate}
Then $\norm{u-y}\leq\tfrac{\varepsilon}{2}$.
\end{lemma}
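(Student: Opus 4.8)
The plan is to reduce Lemma~\ref{lem-uc} to a single application of property~(\ref{eqn-balls}) from Lemma~\ref{lem-nestmod} via a homogenisation (rescaling) argument. First I would set $y' := y/d$ and $u' := u/d$, which is legitimate since $d>0$ by hypothesis~(\ref{item-uca}). Hypothesis~(\ref{item-ucb}) then gives $\norm{y'}=1$, so in particular $y'\in B$; hypothesis~(\ref{item-ucc}) gives $\norm{u'}\geq 1$; and dividing the inequality in hypothesis~(\ref{item-ucd}) through by $d$ gives $\norm{u'-hy'}\leq 1-h+\delta$.

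Next I would verify that the side conditions required to invoke~(\ref{eqn-balls}) are met. Since $r,K>0$ we have $h=\min\{\tfrac14,\tfrac rK\}\in(0,\tfrac12)$, and we may assume $\varepsilon$ small enough that $\tfrac{\varepsilon}{2K}\in(0,4]$, as already noted after~(\ref{eqn-hdelta}). Moreover $\delta=\min\{1,\Psi(h,\tfrac{\varepsilon}{2K})\}\leq\Psi(h,\tfrac{\varepsilon}{2K})$, so the bound of the previous paragraph strengthens to $\norm{u'-hy'}\leq 1-h+\Psi(h,\tfrac{\varepsilon}{2K})$. Applying~(\ref{eqn-balls}) with $y',u',h$ and $\tfrac{\varepsilon}{2K}$ in the roles of $y,u,h,\varepsilon$ then yields $\norm{u'-y'}\leq\tfrac{\varepsilon}{2K}$.

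Finally I would scale back: multiplying by $d$ gives $\norm{u-y}\leq d\cdot\tfrac{\varepsilon}{2K}\leq\tfrac{\varepsilon}{2}$, using $d\leq K$ from hypothesis~(\ref{item-uca}), which is the claim. I do not expect any real obstacle here; the only points to be careful about are that the rescaled data genuinely lies in the domain of $\Psi$ (i.e. $h<\tfrac12$ and $\tfrac{\varepsilon}{2K}\leq 4$) and that the $\min$ in the definition of $\delta$ only weakens the hypothesis we feed into~(\ref{eqn-balls}). The purpose of isolating this lemma is organisational: it packages the homogenisation step once and for all, so that in the subsequent analysis of Theorem~\ref{res-main} one can apply uniform convexity simply by checking the four numerical conditions (\ref{item-uca})--(\ref{item-ucd}).
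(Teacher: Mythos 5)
Your proposal is correct and follows essentially the same route as the paper's own proof: divide hypothesis (\ref{item-ucd}) by $d$, note $y/d\in B$ and $\norm{u/d}\geq 1$, use $\delta\leq\Psi(h,\tfrac{\varepsilon}{2K})$ to apply (\ref{eqn-balls}), and rescale using $d\leq K$. The extra checks you make (that $h\in(0,\tfrac12)$ and $\tfrac{\varepsilon}{2K}\in(0,4]$) are implicit in the paper's setup and do not change the argument.
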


\begin{proof}
Dividing the inequality (\ref{item-ucd}) through by $d>0$ we have
\begin{equation*}
\norm{u/d-hy/d}\leq 1-h+\delta\leq 1-h+\Psi(h,\tfrac{\varepsilon}{2K}).
\end{equation*}
Since, in addition, we have $\norm{u/d}\geq 1$ and $y/d\in B$, we can apply (\ref{eqn-balls}) to obtain
\begin{equation*}
\norm{u/d-y/d}\leq\frac{\varepsilon}{2K}\mbox{ \ \ and therefore \ \ }\norm{u-y}\leq \frac{\varepsilon}{2}
\end{equation*}
where for the last step we use $d\leq K$.
\end{proof}
We now deal with the main non-constructive step in the proof, namely the combination of use of $\inf_{n\in\NN}\norm{T^nx-p}$ in conjunction with asymptotic decreasingness of $(\norm{T^nx-q})_{n\in\NN}$ for all $q\in B_r[p]$. This step corresponds closely to the abstract result already established in Theorem \ref{res-cauchymined}.
\begin{lemma}
\label{lem-nc}
Take some $\eta>0$ and define the function $f:\NN\to\NN$ by
\begin{equation*}
f(j):=\Gamma^\ast(K,r,\eta,g,j)+g^\ast(\Gamma^\ast(K,r,\eta,g,j))
\end{equation*}
where $g^\ast$ and $\Gamma^\ast$ are defined as in Section 2 (where now $\Gamma$ has two additional fixed parameters $K,r$). Then there exists some
\begin{equation*}
N\leq f^{(\ceil{K/\eta})}(0)
\end{equation*}
together with some
\begin{equation*}
n\leq\Gamma^\ast(K,r,\eta,g,f^{(\ceil{K/\eta})}(0))
\end{equation*}
satisfying
\begin{enumerate}[(i)]

\item\label{item-nci} $\forall i\leq n+g(n)(\norm{T^Nx-p}-\eta\leq\norm{T^ix-p})$

\item\label{item-ncii} $\forall q\in B_r[p],i\in [n,n+g(n)](\norm{T^ix-q}\leq \norm{T^Nx-q}+\eta)$

\end{enumerate}
\end{lemma}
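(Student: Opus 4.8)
The plan is to mirror the proof of Theorem~\ref{res-cauchymined}, but now dealing with the two conclusions \eqref{item-nci} and \eqref{item-ncii} simultaneously, and being careful that the uniformity of $\Gamma$ in $q\in B_r[p]$ is exactly what lets us handle \eqref{item-ncii}. Set $x_n:=\norm{T^nx-p}$, so that $x_0=\norm{x-p}<K$ by assumption (b). First I would apply Lemma~\ref{lem-infq} to the sequence $(x_n)$ with the parameter $\eta$ and the function $f$ defined in the statement: since one checks (as in the proof of Theorem~\ref{res-cauchymined}) that $n\leq f(n)$ and that $f$ is monotone, we have $f^\ast=f$, and Lemma~\ref{lem-infq} yields some $N\leq f^{(\ceil{K/\eta})}(0)$ with $x_N-\eta\leq x_i$ for all $i\leq f(N)$, i.e.
\begin{equation*}
\forall i\leq f(N)\,(\norm{T^Nx-p}-\eta\leq\norm{T^ix-p}).
\end{equation*}

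Next I would feed this $N$ into the metastable rate of asymptotic decreasingness. By \eqref{eqn-condc} applied with $\varepsilon:=\eta$ and this $N$ — for the moment with a fixed but arbitrary $q\in B_r[p]$ — there is some $n\leq\Gamma(K,r,\eta,g,N)\leq\Gamma^\ast(K,r,\eta,g,N)\leq\Gamma^\ast(K,r,\eta,g,f^{(\ceil{K/\eta})}(0))$ with $\norm{T^ix-q}\leq\norm{T^Nx-q}+\eta$ for all $i\in[n,n+g(n)]$. The point I need to be careful about is that the \emph{same} $n$ works for every $q\in B_r[p]$: this is precisely the content of the uniformity hypothesis in (c), namely that $\Gamma$ does not depend on $q$, so the witness $n$ extracted from \eqref{eqn-condc} can be chosen independently of $q$ (concretely, one applies \eqref{eqn-condc} once, say at $q=p$, to obtain $n$, and then the bound $n\leq\Gamma(K,r,\eta,g,N)$ together with re-reading \eqref{eqn-condc} at an arbitrary $q$ shows the required inequality holds for that same $n$ — or more simply, \eqref{eqn-condc} is a single statement quantified over all $q\in B_r[p]$ with the bound on $n$ uniform). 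This gives \eqref{item-ncii} with the claimed bound on $n$.

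Finally I would close the loop between the two estimates. The bound on $n$ gives $n\leq\Gamma^\ast(K,r,\eta,g,N)$, hence
\begin{equation*}
n+g(n)\leq n+g^\ast(n)\leq\Gamma^\ast(K,r,\eta,g,N)+g^\ast(\Gamma^\ast(K,r,\eta,g,N))=f(N),
\end{equation*}
using monotonicity of $g^\ast$ and the definition of $f$. Therefore every $i\in[n,n+g(n)]$ satisfies $i\leq f(N)$, so the inequality from Lemma~\ref{lem-infq} applies and yields \eqref{item-nci} on the range $i\leq n+g(n)$. Since $N\leq f^{(\ceil{K/\eta})}(0)$ was already established, both displayed bounds in the statement hold, completing the proof.

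I do not expect any serious obstacle here — the argument is essentially a re-packaging of Theorem~\ref{res-cauchymined} (with $\varepsilon/2$ replaced by the free parameter $\eta$, and with the extra conclusion \eqref{item-nci} kept separate rather than combined into a Cauchy estimate). The only genuinely delicate point is the bookkeeping around uniformity in $q$: one must make sure the witness $n$ for \eqref{item-ncii} is the one bounded by $\Gamma^\ast(K,r,\eta,g,f^{(\ceil{K/\eta})}(0))$ and that this same $n$ also lies in the range where Lemma~\ref{lem-infq} gives \eqref{item-nci}. Everything else is the routine verification that $f^\ast=f$ and the monotonicity manipulations with $\Gamma^\ast$ and $g^\ast$ from Notation~\ref{not-star}.
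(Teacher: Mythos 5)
Your proposal is correct and follows essentially the same route as the paper's proof: apply Lemma \ref{lem-infq} to $x_n:=\norm{T^nx-p}$ (using $f^\ast=f$) to obtain $N\leq f^{(\ceil{K/\eta})}(0)$, invoke (\ref{eqn-condc}) with $\varepsilon:=\eta$ to get $n\leq\Gamma^\ast(K,r,\eta,g,f^{(\ceil{K/\eta})}(0))$, and use $n+g(n)\leq f(N)$ to transfer the Lemma \ref{lem-infq} estimate to $[n,n+g(n)]$. One small caveat: your parenthetical suggestion to apply (\ref{eqn-condc}) at $q=p$ and then ``re-read'' it at arbitrary $q$ does not by itself yield a $q$-independent witness (different $q$ could produce different $n$ below the same bound); the correct justification is your alternative reading, which is exactly how the paper uses (\ref{eqn-condc}), namely that the witness $n$ itself, not merely its bound $\Gamma$, is uniform in $q\in B_r[p]$.
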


\begin{proof}
First of all, just as in the proof of Theorem \ref{res-cauchymined}, we note that $f^\ast(n)=f(n)$. Therefore applying Lemma \ref{lem-infq} to the sequence $x_n:=\norm{T^nx-p}$ and noting that $x_0=\norm{x-p}<K$, there exists some $N\leq f^{(\ceil{K/\eta})}(0)$ such that
\begin{equation}
\label{eqn-nc0}
\forall i\leq f(N)(\norm{T^Nx-p}-\eta\leq \norm{T^ix-p}).
\end{equation}
Now, by the defining property of $\Gamma$ (\ref{eqn-condc}), there exists some
\begin{equation*}
n\leq \Gamma(K,r,\eta,g,N)\leq \Gamma^\ast(K,r,\eta,g,N)\leq \Gamma^\ast(K,r,\eta,g,f^{(\ceil{K/\eta})}(0))
\end{equation*}
satisfying
\begin{equation*}
\forall i\in [n,n+g(n)](\norm{T^ix-q}\leq \norm{T^Nx-q}+\eta)
\end{equation*}
for all $q\in B_r[p]$. This establishes (\ref{item-ncii}). But
\begin{equation*}
n+g(n)\leq n+g^\ast(n)\leq \Gamma^\ast(K,r,\eta,g,N)+g^\ast(\Gamma^\ast(K,r,\eta,g,N))=f(N)
\end{equation*}
and thus (\ref{item-nci}) follows from (\ref{eqn-nc0}).
\end{proof}
We now tie together the previous two lemmas.
\begin{lemma}
\label{lem-tie}
Suppose that $N$ and $n$ are as in Lemma \ref{lem-nc} for
\begin{equation*}
\eta:=\frac{r\delta}{4}
\end{equation*}
and that
\begin{equation}
\label{eqn-inf}
\norm{T^Nx-p}\geq r.
\end{equation}
Define $d\in\RR$ together with $y\in X$ as follows:
\begin{equation*}
\begin{aligned}
d&:=\norm{T^Nx-p}-\eta\\
y&:=d\left(\frac{T^Nx-p}{\norm{T^Nx-p}}\right).
\end{aligned}
\end{equation*}
Then for any $i\in [n,n+g(n)]$ we have
\begin{equation*}
\norm{T^ix-(p+y)}\leq\frac{\varepsilon}{2}.
\end{equation*}
\end{lemma}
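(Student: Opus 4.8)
The plan is to verify the four hypotheses of Lemma \ref{lem-uc} for the points $u := T^ix - p$ and $y$ as defined, with the given $d$, and then read off the conclusion $\norm{u - y} \leq \varepsilon/2$, i.e. $\norm{T^ix - (p+y)} \leq \varepsilon/2$. This mirrors exactly the endgame of the proof of Theorem \ref{res-main}, where the conclusion of Lemma \ref{lem-balls} is invoked with centres $q_N$ and $p$; here Lemma \ref{lem-uc} is the bookkeeping device that repackages the reformulated uniform convexity (\ref{eqn-balls}) in the needed metric form.

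First I would check (\ref{item-ucb}) and (\ref{item-ucc}), which are immediate: $y$ is by construction $d$ times a unit vector so $\norm{y} = d$, and $\norm{u} = \norm{T^ix - p} \geq d$ follows from part (\ref{item-nci}) of Lemma \ref{lem-nc} (since $\norm{T^Nx-p} - \eta \leq \norm{T^ix-p}$ for all $i \leq n + g(n)$, and in particular for $i \in [n, n+g(n)]$). For (\ref{item-uca}) I need $0 < d \leq K$: the upper bound $d \leq \norm{T^Nx - p} \leq \norm{T^Nx - T^{N-1}\cdots} $ — more simply, one argues $\norm{T^Nx - p} \leq \norm{x-p} < K$ is \emph{not} available without nonexpansiveness, so instead one should note $d = \norm{T^Nx-p} - \eta \leq \norm{T^Nx - p}$, and bound $\norm{T^Nx-p}$ by the infimum argument — actually the cleanest route is $d \leq \norm{T^Nx-p}$ and then use that the sequence $(\norm{T^nx - p})$ is bounded; but since we only assumed $\norm{x-p} < K$, the honest bound uses Lemma \ref{lem-infq}/monotonicity data. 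I expect the intended argument is simply that $\norm{T^Nx - p} \leq K$ can be read off from the structure: in fact $d \le \norm{T^Nx-p}$ and positivity of $d$ needs $\norm{T^Nx-p} > \eta = r\delta/4$, which follows from (\ref{eqn-inf}) together with $\delta \leq 1$ and $r$-scaling. The upper bound $d \le K$ I would obtain from hypothesis (b) via a bound on $\norm{T^Nx - p}$; if no such bound is immediate, this is the step to be careful about, and likely the statement tacitly uses that $(\norm{T^nx-p})$ stays below $K$ (true when $T$ is nonexpansive or asymptotically nonexpansive with the right normalisation, which is the setting).

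The main obstacle is hypothesis (\ref{item-ucd}): $\norm{u - hy} \leq d(1 - h + \delta)$. Here I would use part (\ref{item-ncii}) of Lemma \ref{lem-nc} applied to the specific fixed point $q := p + hy/d \cdot d$... more precisely, the point $q_N := p + h(T^Nx - p)/\norm{T^Nx-p} \cdot (\text{something})$; the natural choice is $q := p + h \cdot \frac{y}{d} \cdot d$, i.e. a point on the segment $\seg{p, T^Nx}$ at distance $hd$ from $p$ — wait, one wants distance matching the ball radius. Following the proof of Theorem \ref{res-main}, take $q := p + hy$ (note $\norm{q - p} = h\norm{y} = hd \leq \frac{r}{K}\cdot K = r$ using $h \leq r/K$ and $d \leq K$, so $q \in B_r[p] \subset \fix{T}$). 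Then (\ref{item-ncii}) with this $q$ and $\eta = r\delta/4$ gives, for $i \in [n, n+g(n)]$,
\begin{equation*}
\norm{T^ix - q} \leq \norm{T^Nx - q} + \eta.
\end{equation*}
Now $u - hy = (T^ix - p) - hy = T^ix - q$, so the left side is $\norm{u - hy}$. For the right side, $\norm{T^Nx - q} = \norm{T^Nx - p - hy}$; since $y$ points in the direction of $T^Nx - p$ with $\norm{y} = d = \norm{T^Nx - p} - \eta$, we get $\norm{T^Nx - p - hy} = \norm{(T^Nx - p)(1 - hd/\norm{T^Nx-p})}$... computing, $T^Nx - p - hy = \frac{T^Nx-p}{\norm{T^Nx-p}}(\norm{T^Nx-p} - hd)$, so its norm is $\norm{T^Nx-p} - hd = d + \eta - hd$. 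Hence
\begin{equation*}
\norm{u - hy} \leq d + \eta - hd + \eta = d - hd + 2\eta = d(1-h) + \frac{r\delta}{2} \leq d(1-h) + \frac{d\delta}{2}\cdot\frac{r}{d}.
\end{equation*}
Since $d \geq \norm{T^Nx-p} - \eta \geq r - r/4 \geq r/2 > 0$ wait I need $2\eta \le d\delta$, i.e. $r\delta/2 \le d\delta$, i.e. $d \ge r/2$, which holds since $\norm{T^Nx - p} \ge r$ and $\eta = r\delta/4 \le r/4$. So $2\eta \le d\delta$ and $\norm{u - hy} \le d(1-h) + d\delta = d(1 - h + \delta)$, establishing (\ref{item-ucd}). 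With all four hypotheses verified, Lemma \ref{lem-uc} yields $\norm{u - y} = \norm{T^ix - (p+y)} \leq \varepsilon/2$, completing the proof. The one genuine care-point is making sure $q = p + hy \in B_r[p]$ and $0 < d \le K$ simultaneously, which is exactly where the normalisations $h = \min\{1/4, r/K\}$ and the standing bound $\norm{x-p} < K$ (propagating to $\norm{T^Nx - p}$) are used.
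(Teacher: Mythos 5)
Your overall strategy is exactly the paper's: fix $i\in[n,n+g(n)]$, set $u:=T^ix-p$, verify the four hypotheses of Lemma \ref{lem-uc}, with hypothesis (\ref{item-ucd}) obtained by applying Lemma \ref{lem-nc} (\ref{item-ncii}) to the fixed point $q:=p+hy$ and computing $\norm{T^Nx-q}=\norm{T^Nx-p}-hd=d-hd+\eta$, then absorbing $2\eta=r\delta/2\leq d\delta$ via $d\geq r/2$. That computation, and your checks of (\ref{item-ucb}) and (\ref{item-ucc}), are correct and coincide with the paper.

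However, there is a genuine gap where you hesitate: the bound $d\leq K$ in hypothesis (\ref{item-uca}), which you also need for $q\in B_r[p]$ (via $hd\leq\frac{r}{K}d\leq r$). You end up speculating that the lemma ``tacitly uses that $(\norm{T^nx-p})$ stays below $K$,'' invoking nonexpansiveness — but no such assumption is available in this setting; only $\norm{x-p}<K$ and the abstract rate $\Gamma$ are assumed, and indeed $\norm{T^Nx-p}\leq K$ need not hold. The missing observation is already contained in the ingredient you are using for (\ref{item-ucc}): Lemma \ref{lem-nc} (\ref{item-nci}) is quantified over \emph{all} $i\leq n+g(n)$, not only $i\in[n,n+g(n)]$. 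Taking $i=0$ there gives
\begin{equation*}
d=\norm{T^Nx-p}-\eta\leq\norm{T^0x-p}=\norm{x-p}<K,
\end{equation*}
which is exactly how the paper closes this step (note it bounds $d$, not $\norm{T^Nx-p}$; the $\eta$ slack is what makes it work). With this one line inserted, your argument is complete and identical in substance to the paper's proof.
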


\begin{proof}
First, from our assumption (\ref{eqn-inf}) we have
\begin{equation}
\label{eqn-tie0}
d=\norm{T^Nx-p}-\eta\geq r-\frac{r\delta}{4}>\frac{r}{2}
\end{equation}
where the last step follows from $\delta\leq 1$. This will be used in two places in the proof. We now fix some $i\in [n,n+g(n)]$ and define $u_i:=T^ix-p$. We will show that $d,u_i$ and $y$ satisfy each of the conditions from Lemma \ref{lem-uc}.

For (\ref{item-uca}), $d>0$ follows from (\ref{eqn-tie0}), and using Lemma \ref{lem-nc} (\ref{item-nci}) we have in particular
\begin{equation*}
d=\norm{T^Nx-p}-\eta\leq \norm{T^0x-p}=\norm{x-p}<K.
\end{equation*}
Condition (\ref{item-ucb}) holds by definition, while for (\ref{item-ucc}) we use Lemma \ref{lem-nc} (\ref{item-nci}) again: Since $i\leq n+g(n)$ we have
\begin{equation*}
\norm{u_i}=\norm{T^ix-p}\geq \norm{T^Nx-p}-\eta=d.
\end{equation*}
The final condition (\ref{item-ucd}) is a little more involved. First let us define $q:=p+hy$. Since
\begin{equation*}
\norm{p-q}=\norm{hy}=hd\leq \frac{rd}{K}\leq r
\end{equation*}
here using that $h\leq\frac{r}{K}$ and $d\leq K$, we thus have $q\in B_r[p]$, and so now appealing to Lemma \ref{lem-nc} (\ref{item-ncii}) we have
\begin{equation*}
\begin{aligned}
\norm{u_i-hy}&=\norm{T^ix-p-hy}\\
&=\norm{T^ix-q}\\
&\stackrel{(\ref{item-ncii})}{\leq}\norm{T^Nx-q}+\eta\\
&=\norm{T^Nx-p-hy}+\eta\\
&=\norm{(T^Nx-p)-hd\left(\frac{T^Nx-p}{\norm{T^Nx-p}}\right)}+\eta\\
&=\left(1-\frac{hd}{\norm{T^Nx-p}}\right)\norm{T^Nx-p}+\eta\\
&=\norm{T^Nx-p}-hd+\eta\\
&=d-hd+2\eta\\
&=d-hd+\frac{r\delta}{2}\\
&<d(1-h+\delta)
\end{aligned}
\end{equation*}
where the last step uses (\ref{eqn-tie0}). Since the hypotheses of Lemma \ref{lem-uc} are now all satisfied, we can infer 
\begin{equation*}
\norm{T^ix-(p+y)}=\norm{u_i-y}\leq \frac{\varepsilon}{2}.
\end{equation*}
and we are done.
\end{proof}

\begin{corollary}
\label{res-tiecauchy}
Suppose that $n$ and $N$ are defined as in Lemma \ref{lem-nc} for $\eta:=\frac{r}{4}\cdot\min\{1,\Psi(\min\{\tfrac{1}{4},\frac{r}{K}\},\frac{\varepsilon}{2k})\}$ and that $\norm{T^Nx-p}\geq r$. Then we have $\forall i,j\in [n,n+g(n)](\norm{T^ix-T^jx}\leq\varepsilon)$.
\end{corollary}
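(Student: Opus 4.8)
The plan is to combine Lemma~\ref{lem-tie} with the trivial observation that $p+y$ lies within distance $\tfrac{\varepsilon}{2}$ of each $T^ix$ for $i\in[n,n+g(n)]$, and then apply the triangle inequality. Concretely, I would first note that the $\eta$ chosen in the statement is exactly $\tfrac{r\delta}{4}$ with $\delta=\min\{1,\Psi(\min\{\tfrac14,\tfrac{r}{K}\},\tfrac{\varepsilon}{2K})\}$, so this $\eta$ matches the one required in Lemma~\ref{lem-tie}. (The ``$2k$'' in the corollary statement is a typo for $2K$.) So the hypotheses of Lemma~\ref{lem-tie} are satisfied verbatim under the standing assumption $\norm{T^Nx-p}\ge r$.

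Next I would invoke Lemma~\ref{lem-tie} to get, for every $i\in[n,n+g(n)]$, the bound $\norm{T^ix-(p+y)}\le\tfrac{\varepsilon}{2}$, where $p$ and $y$ are the fixed points independent of $i$ specified there. Then for any $i,j\in[n,n+g(n)]$ the triangle inequality gives
\begin{equation*}
\norm{T^ix-T^jx}\le\norm{T^ix-(p+y)}+\norm{(p+y)-T^jx}\le\tfrac{\varepsilon}{2}+\tfrac{\varepsilon}{2}=\varepsilon,
\end{equation*}
which is exactly the claimed conclusion. This is essentially a one-line deduction once Lemma~\ref{lem-tie} is in hand; the only mild care needed is to confirm that $n$ and $N$ here are literally the objects produced by Lemma~\ref{lem-nc} with the stated $\eta$, so that Lemma~\ref{lem-tie} applies without any reindexing.

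There is no real obstacle: all the work has been done in the preceding lemmas. The one point worth double-checking is the bookkeeping of the constant $\eta$ — namely that $\tfrac{r}{4}\cdot\min\{1,\Psi(\min\{\tfrac14,\tfrac{r}{K}\},\tfrac{\varepsilon}{2K})\}$ coincides with $\tfrac{r\delta}{4}$ for $\delta$ and $h$ as abbreviated in~(\ref{eqn-hdelta}) — and the implicit reminder that $\varepsilon$ is taken small enough that $\tfrac{\varepsilon}{2K}\in(0,4]$, so that $\Psi$ and hence $\delta$ and $\eta$ are well defined. Given the role of this corollary (it packages the $\norm{T^Nx-p}\ge r$ branch of the analysis of Theorem~\ref{res-main}, to be combined afterwards with Lemma~\ref{lem-triv} and the bound on $n$ from Lemma~\ref{lem-nc} in the proof of Theorem~\ref{res-mined}), I would keep the proof to exactly these two steps and not unfold anything further.
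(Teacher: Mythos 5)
Your proposal is correct and matches the paper's proof exactly: identify the stated $\eta$ with $\tfrac{r\delta}{4}$ (the $\tfrac{\varepsilon}{2k}$ being a typo for $\tfrac{\varepsilon}{2K}$), apply Lemma \ref{lem-tie} to bound $\norm{T^ix-(p+y)}\leq\tfrac{\varepsilon}{2}$, and conclude by the triangle inequality.
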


\begin{proof}
Note that $\eta=\frac{r\delta}{4}$ for $\delta$ as defined in (\ref{eqn-hdelta}). Defining $d$ and $y$ as in Lemma \ref{lem-tie} it follows that for $i,j\in [n,n+g(n)]$ we have $
\norm{T^ix-T^jx}\leq \norm{T^ix-(p+y)}+\norm{(p+y)-T^jx}\leq \varepsilon$.
\end{proof}
The main theorem now follows by combining the above corollary with Lemma \ref{lem-triv} and expanding all of our definitions.

\begin{theorem}
\label{res-mined}
Let $X$ be a Banach space, $T:C\to C$ a mapping such that $B_r[p]\subset\fix{T}$ for some $p\in X$ and $r>0$. Suppose that $x\in C$ and $\norm{x-p}<K$ for some $K$. Finally, suppose that $\Phi$ is a modulus of uniform convexity for $X$ in the sense of (\ref{eqn-unif}) and $\Gamma$ a rate of asymptotic decreasingness for $(\norm{T^nx-q})_{n\in\NN}$ uniform for $q\in B_r[p]$ as in (\ref{eqn-condc}). Then
\begin{equation*}
\forall \varepsilon>0,g:\NN\to\NN\exists n\leq\Omega(\Phi,\Gamma,K,r,\varepsilon,g)\forall i,j\in [n,n+g(n)](\norm{T^ix-T^jx}\leq\varepsilon)
\end{equation*}
where $\Omega$ is defined as follows:
\begin{itemize}

\item $\Omega(\Phi,\Gamma,K,r,\varepsilon,g):=\Gamma^\ast(K,r,\eta,g,f^{(\ceil{K/\eta})}(0))$;

\item $f(j):=\Gamma^\ast(K,r,\eta,g,j)+g^\ast(\Gamma^\ast(K,r,\eta,g,j))$;

\item $\eta:=\frac{r}{4}\cdot\min\{1,\Psi(\min\{\tfrac{1}{4},\frac{r}{K}\},\frac{\varepsilon}{2K})\}$;

\item $\Psi(h,\varepsilon'):=\min\{\frac{\varepsilon'}{2},2h\Phi(\frac{\varepsilon'}{2})\}$.

\end{itemize}
\end{theorem}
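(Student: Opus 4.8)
The plan is to assemble Theorem \ref{res-mined} purely by bookkeeping from the lemmas already established, since all the real work has been done. First I would observe that the statement of the theorem is a disjunctive case analysis hidden inside a single bound: either the Picard iterates eventually enter $B_r[p]$, in which case Lemma \ref{lem-triv} already gives the conclusion, or $\norm{T^nx-p}\geq r$ for all $n$, in which case the quantitative argument applies. Crucially, the bound $\Omega$ must work uniformly in both cases, so I would first check that the $n$ produced by Lemma \ref{lem-triv} can be absorbed into the same bound coming from the second case — and indeed it can, because the $n$ from Lemma \ref{lem-triv} is simply the index where the sequence first enters $B_r[p]$, which is bounded by the same search witnesses we construct in the main case (in fact, as in the proof of Theorem \ref{res-main}, if $\norm{T^nx-p}<r$ for some $n$ then $d:=\inf_m\norm{T^mx-p}<r$ and the relevant $N,n$ from Lemma \ref{lem-nc} still exist and satisfy the required properties vacuously or trivially).

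Next I would fix $\varepsilon>0$ and $g:\NN\to\NN$, and set $\eta:=\tfrac{r}{4}\cdot\min\{1,\Psi(\min\{\tfrac14,\tfrac rK\},\tfrac{\varepsilon}{2K})\}$, exactly matching the definition of $\delta$ in (\ref{eqn-hdelta}) so that $\eta=\tfrac{r\delta}{4}$. I would then invoke Lemma \ref{lem-nc} with this $\eta$ to obtain $N\leq f^{(\ceil{K/\eta})}(0)$ and $n\leq\Gamma^\ast(K,r,\eta,g,f^{(\ceil{K/\eta})}(0))=\Omega(\Phi,\Gamma,K,r,\varepsilon,g)$ satisfying conditions (\ref{item-nci}) and (\ref{item-ncii}). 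The only case distinction left is whether $\norm{T^Nx-p}\geq r$ or not. If yes, Corollary \ref{res-tiecauchy} directly delivers $\norm{T^ix-T^jx}\leq\varepsilon$ for all $i,j\in[n,n+g(n)]$. If no, then $\norm{T^Nx-p}<r$, so $T^Nx\in\fix{T}$, hence $T^ix=T^Nx$ for all $i\geq N$; but condition (\ref{item-nci}) forces $\norm{T^Nx-p}-\eta\leq\norm{T^ix-p}$ for all $i\leq n+g(n)$, and since $n$ could in principle be smaller than $N$ I would need to note that in this degenerate case we can simply take $n:=N$ (it still lies under the bound $\Omega$ by the monotonicity properties of $\Gamma^\ast$ and $f^{(\cdot)}$), at which point all iterates in the window are equal and the Cauchy estimate is trivial. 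Finally I would expand $f$, $\eta$ and $\Psi$ into the explicit closed forms listed in the bullet points, which is purely a matter of substitution.

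The main obstacle — and it is a mild one — is making the case analysis genuinely uniform, i.e. convincing the reader that the \emph{same} witness $n\leq\Omega$ works whether or not the iterates enter $B_r[p]$. The cleanest way around this is the observation already used implicitly in Section \ref{sec-main}: Lemma \ref{lem-nc} is proved by applying Lemma \ref{lem-infq} to the sequence $x_n:=\norm{T^nx-p}$, which makes no assumption about whether this sequence dips below $r$; so the witnesses $N,n$ exist unconditionally, and the dichotomy "$\norm{T^Nx-p}\geq r$ vs.\ $<r$" can be resolved \emph{after} $N$ is chosen, with the $<r$ branch handled by the trivial Lemma \ref{lem-triv}-style collapse of the orbit. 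Apart from this structural point, everything is routine: the bound $\Omega$ is literally read off from the bound in Lemma \ref{lem-nc} and Corollary \ref{res-tiecauchy}, and the remaining work is unfolding the nested definitions of $\eta$, $f$ and $\Psi$.
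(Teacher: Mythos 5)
Your proposal is correct and is essentially the paper's own proof: apply Lemma \ref{lem-nc} with $\eta=\tfrac{r\delta}{4}$, use Corollary \ref{res-tiecauchy} when $\norm{T^Nx-p}\geq r$, and otherwise fall back on the Lemma \ref{lem-triv}-style collapse of the orbit, noting that $f^{(\ceil{K/\eta})}(0)\leq\Gamma^\ast(K,r,\eta,g,f^{(\ceil{K/\eta})}(0))=\Omega$ so the witness stays under the stated bound. The only cosmetic difference is that the paper splits on whether $\norm{T^nx-p}<r$ holds for \emph{some} $n\leq f^{(\ceil{K/\eta})}(0)$, while you resolve the dichotomy at the specific $N$ from Lemma \ref{lem-nc} and set $n:=N$ in the degenerate case; both variants are sound and rely on the same monotonicity of $\Gamma^\ast$.
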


\begin{proof}
There are two possibilities: If $\norm{T^nx-p}< r$ for some $n\leq f^{(\ceil{K/\eta})}(0)$ then by Lemma \ref{lem-triv} we would have $\norm{T^ix-T^jx}\leq\varepsilon$ for all $i,j\in [n,n+g(n)]$, and since $f^{(\ceil{K/\eta})}(0)\leq\Omega(\Phi,\Gamma,K,r,\varepsilon,g)$ the result follows.

On the other hand, suppose that this is not the case, and $n$ and $N$ are as in Lemma \ref{lem-nc}. In particular, since $N\leq f^{(\ceil{K/\eta})}(0)$ we must have $\norm{T^Nx-p}\geq r$, and so $\norm{T^ix-T^jx}\leq\varepsilon$ for all $i,j\in [n,n+g(n)]$ by Corollary \ref{res-tiecauchy}, and $n\leq \Omega(\Phi,\Gamma,K,r,\varepsilon,g)$ by Lemma \ref{lem-nc}.
\end{proof}

Note that as a simple corollary to our main theorem, we are able to locate \emph{approximate} fixed points $x_\varepsilon$ of $T$, where $x_\varepsilon$ is an approximate, or $\varepsilon$-fixed point if
\begin{equation*}
\norm{Tx_\varepsilon-x_\varepsilon}\leq\varepsilon.
\end{equation*}
\begin{corollary}
\label{res-appfix}
Under the conditions of Theorem \ref{res-mined}, for any $x\in C$ and $\varepsilon>0$, there exists some $n\leq \Omega(\Phi,\Gamma,K,r,\varepsilon,g)$ where $g(k):=k+1$ such that $x_n=T^nx$ is an $\varepsilon$-fixed point of $T$.
\end{corollary}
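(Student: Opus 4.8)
The plan is to apply Theorem \ref{res-mined} with the specific counterfunction $g(k):=k+1$ and then read off asymptotic regularity from the Cauchy-type conclusion. First I would invoke the theorem for this $g$ and the given $\varepsilon$, obtaining some $n\leq\Omega(\Phi,\Gamma,K,r,\varepsilon,g)$ with $\norm{T^ix-T^jx}\leq\varepsilon$ for all $i,j\in[n,n+g(n)]$. Since $g(n)=n+1\geq 1$, the interval $[n,n+g(n)]$ contains both $n$ and $n+1$, so in particular we may take $i=n+1$ and $j=n$ to get $\norm{T^{n+1}x-T^nx}\leq\varepsilon$. Writing $x_n:=T^nx$, this says exactly $\norm{Tx_n-x_n}\leq\varepsilon$, i.e. $x_n$ is an $\varepsilon$-fixed point of $T$, which is the claim.

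There is essentially no obstacle here: the only thing to check is that the chosen $g$ makes the metastability interval wide enough to compare consecutive iterates, which it does since $g(n)\geq 1$ for all $n$. One might also note that this works because the sequence $(\norm{T^{n+1}x-T^nx})_{n\in\NN}$ need not be assumed monotone for this particular conclusion — we are only extracting a single approximate fixed point, not a rate of asymptotic regularity — so no further structural hypothesis on $T$ beyond those already in Theorem \ref{res-mined} is needed. (If one did want a full rate of asymptotic regularity, one would additionally use monotonicity of $(\norm{T^{n+1}x-T^nx})_{n}$, valid e.g. when $T$ is nonexpansive, together with Theorem \ref{res-cauchymined}, but that is beyond what the corollary asserts.)

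So the proof is a one-line specialisation: choose $g(k)=k+1$, apply Theorem \ref{res-mined}, and instantiate the universally quantified $i,j$ at $n+1$ and $n$ respectively.
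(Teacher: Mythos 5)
Your proposal is correct and is essentially the same argument as the paper's own proof: instantiate Theorem \ref{res-mined} with $g(k):=k+1$, note that $n+1\in[n,n+g(n)]$ since $g(n)\geq 1$, and specialise $i=n+1$, $j=n$ to obtain $\norm{Tx_n-x_n}\leq\varepsilon$. Nothing further is needed, and your parenthetical remark correctly distinguishes this single approximate fixed point from the stronger asymptotic regularity statement the paper derives later under nonexpansiveness.
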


\begin{proof}
By Theorem \ref{res-mined}, there exists some $n\leq \Omega(\Phi,\Gamma,K,r,\varepsilon,g)$ such that for all $i,j\in [n,n+1](\norm{T^ix-T^jx}\leq\varepsilon)$, so in particular for $i=n+1,j=n$ we have $\norm{T^nx_n-x_n}=\norm{T^{(n+1)}x-T^nx}\leq\varepsilon$.
\end{proof}

\section{Special cases}
\label{sec-special}

We conclude by considering concrete properties on the mapping $T:C\to C$ which guarantee that there is some metastable rate of asymptotic decreasingness $\Gamma$ satisfying (\ref{eqn-condc}).

\subsection{Nonexpansive mappings}
\label{sec-special-nonex}

As we have already mentioned, in the case where $T$ is nonexpansive, the function $\Gamma(K,r,\varepsilon,g,N):=N$ forms a metastable rate of asymptotic decreasingness. In this case, a rate of metastability for the Picard iterates is given by
\begin{equation}
\label{eqn-nonex}
\Omega(\Phi,K,r,\varepsilon,g):=\tilde{g}^{(\ceil{K/\eta})}(0)
\end{equation}
for $\tilde g(j):=j+g^\ast(j)$ and $\eta$ defined as in Theorem \ref{res-mined}. In fact, one can even show that the same bound works with $\tilde g(j):=j+g(j)$, but we do not give the details here. Moreover, in the case of nonexpansive maps, Corollary \ref{res-appfix} even gives us a direct rate of asymptotic regularity as a byproduct. Recall that a mapping $T$ is asymptotically regular if
\begin{equation*}
\forall \varepsilon>0\exists n\forall i\geq n(\norm{T^{i+1}x-T^ix}\leq\varepsilon).
\end{equation*}
Now by Corollary \ref{res-appfix}, setting $g(k):=k+1$ in (\ref{eqn-nonex}) and noting that in this case $\tilde g^{(m)}(k)=k+m$, it follows that there exists some $n\leq \ceil{K/\eta}$ such that $\norm{T^{n+1}x-T^nx}\leq\varepsilon$. But since $T$ is nonexpansive, for any $i\geq n$ we have $\norm{T^{i+1}x-T^ix}\leq \norm{T^{n+1}x-T^nx}\leq\varepsilon$, and so $f(\varepsilon):=\ceil{K/\eta}$ is a rate of asymptotic regularity for $T$. Simple, concrete instantiations of this can be given in particular spaces, for example:
\begin{theorem}
\label{res-lp}
Let $T:C\to C$ be a nonexpansive mapping in $L_p$ for $2\leq p<\infty$, and suppose that $B_r[q]\subset \fix{T}$ for some $q\in L_p$ and $r>0$. Suppose that $x\in C$ and $\norm{x-q}<K$. Then 
\begin{equation*}
\forall \varepsilon>0, i\geq f(\varepsilon)(\norm{T^{i+1}x-T^ix}\leq\varepsilon)
\end{equation*}
where
\begin{equation*}
f(\varepsilon):=\left\lceil\frac{p\cdot 2^{3p+1}\cdot K^{p+2}}{\varepsilon^p\cdot r^2}\right\rceil
\end{equation*}
\end{theorem}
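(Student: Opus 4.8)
The plan is to specialise Theorem \ref{res-mined} (or rather its corollary on asymptotic regularity for nonexpansive maps) to the space $X=L_p$ with $2\le p<\infty$, and then to make everything numerically explicit by plugging in a concrete modulus of uniform convexity for $L_p$. The key ingredient is the classical Hanner inequality, which yields that $L_p$ for $2\le p<\infty$ has modulus of uniform convexity $\delta_{L_p}(\varepsilon)\ge \varepsilon^p/(p\cdot 2^p)$; hence the function $\Phi(\varepsilon'):=\varepsilon'^p/(p\cdot 2^p)$ is a (valid, though not optimal) modulus of uniform convexity in the sense of (\ref{eqn-unif}). This is really the only analytic input beyond the work already done; everything else is bookkeeping with the definition of $\Omega$.

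First I would recall from Section \ref{sec-special-nonex} that for nonexpansive $T$ we may take $\Gamma(K,r,\varepsilon,g,N):=N$, and that with $g(k):=k+1$ the argument there shows $\norm{T^{n+1}x-T^nx}\le\varepsilon$ for some $n\le\ceil{K/\eta}$, whence (by nonexpansiveness) $\norm{T^{i+1}x-T^ix}\le\varepsilon$ for all $i\ge\ceil{K/\eta}$. So it suffices to show $\ceil{K/\eta}\le f(\varepsilon)$ for the stated $f$. Next I would unwind $\eta$: by definition $\eta=\frac{r}{4}\cdot\min\{1,\Psi(h,\frac{\varepsilon}{2K})\}$ with $h=\min\{\tfrac14,\tfrac rK\}$ and $\Psi(h,\varepsilon')=\min\{\tfrac{\varepsilon'}{2},2h\Phi(\tfrac{\varepsilon'}{2})\}$. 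Substituting $\Phi$ gives $2h\Phi(\tfrac{\varepsilon'}{2})=2h\cdot\frac{(\varepsilon'/2)^p}{p\cdot 2^p}=\frac{h\varepsilon'^p}{p\cdot 2^{2p-1}}$ with $\varepsilon'=\frac{\varepsilon}{2K}$.

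Then I would argue that for all small enough $\varepsilon$ the relevant minima are attained by the $\Phi$-term and that all the $\min$'s with $1$ are harmless: since we may assume $\varepsilon\le 2K$ (else the conclusion is vacuous as the iterates lie within distance $<K$ of a fixed point region, or simply take $\varepsilon$ small), and since $h\le \tfrac rK\le 1$, a few crude estimates show $\Psi(h,\tfrac{\varepsilon}{2K})=\frac{h}{p\cdot 2^{2p-1}}\cdot(\tfrac{\varepsilon}{2K})^p$ up to the trivial cap, and likewise $\eta\ge \frac{r}{4}\cdot\frac{h}{p\cdot 2^{2p-1}}\cdot\frac{\varepsilon^p}{2^pK^p}$. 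Using $h\ge \frac{r}{K}\cdot\frac14$ (valid since $\frac rK\le 1$ forces $\min\{\tfrac14,\tfrac rK\}\ge\tfrac14\cdot\tfrac rK$ only when $\tfrac rK\le\tfrac14$; in the other case $h=\tfrac14\ge\tfrac14\cdot\tfrac rK$ as well), we get $\eta\ge \frac{r^2\varepsilon^p}{p\cdot 2^{3p+1}\cdot K^{p+2}}$ after collecting constants, whence $\ceil{K/\eta}\le\ceil{\frac{p\cdot 2^{3p+1}\cdot K^{p+2}}{\varepsilon^p r^2}}=f(\varepsilon)$, as claimed.

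The main obstacle is purely one of careful constant-chasing: one must verify that replacing every nested $\min$ by its $\Phi$-branch is legitimate (i.e. that the $\tfrac{\varepsilon'}{2}$ term and the $1$-caps never dominate in the regime of interest) and that the exact power of $2$ and the factor $p$ in the denominator come out to $2^{3p+1}$ and $p$ respectively. A secondary point worth a sentence is justifying the choice of modulus: one should cite Hanner's inequality (or Clarkson's inequalities for $2\le p<\infty$) to see that $\delta_{L_p}(\varepsilon)\ge\varepsilon^p/(p2^p)$, so that $\Phi$ is admissible in (\ref{eqn-unif}); the slack in this bound is exactly what makes the final constant clean rather than sharp. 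No genuinely hard mathematics is involved beyond what is already established in Theorem \ref{res-mined} and Section \ref{sec-special-nonex}.
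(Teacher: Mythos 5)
Your strategy is exactly the paper's: take $\Gamma(K,r,\varepsilon,g,N):=N$ for nonexpansive $T$, use the resulting rate of asymptotic regularity $\ceil{K/\eta}$ from Section \ref{sec-special-nonex}, plug in the modulus $\Phi(\varepsilon')=\varepsilon'^p/(p2^p)$ for $L_p$ with $2\le p<\infty$, and unwind the definition of $\eta$ from Theorem \ref{res-mined}. The choice of modulus (via Hanner/Clarkson) and the observation that the caps $\min\{\cdot,1\}$ and the $\tfrac{\varepsilon'}{2}$-branch are harmless for $\varepsilon\le 2K$ are fine.

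The genuine problem is that the constant-chasing you sketch does not reach the stated constant, and the exact constant is the entire content of the theorem beyond what is already proved. Unwinding gives $2h\Phi(\tfrac{\varepsilon'}{2})=\frac{h\varepsilon^p}{p\,2^{3p-1}K^p}$ for $\varepsilon'=\frac{\varepsilon}{2K}$, hence (when this branch realises the minima) $\eta=\frac{r h\varepsilon^p}{p\,2^{3p+1}K^p}$ and $\ceil{K/\eta}=\ceil{\frac{p\,2^{3p+1}K^{p+1}}{r h\,\varepsilon^p}}$. To obtain $f(\varepsilon)$ you must use $h=\tfrac{r}{K}$ on the nose (which is what $h=\min\{\tfrac14,\tfrac rK\}$ equals when $r\le K/4$); your substitute bound $h\ge\tfrac{r}{4K}$ gives away a factor $4$ and lands at $2^{3p+3}$, not $2^{3p+1}$. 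Moreover, your displayed inequality $\eta\ge\frac{r^2\varepsilon^p}{p\,2^{3p+1}K^{p+2}}$ has the wrong power of $K$ (it should be $K^{p+1}$), and even taken at face value it only yields $K/\eta\le\frac{p\,2^{3p+1}K^{p+3}}{r^2\varepsilon^p}$, a factor $K$ above the claimed $f(\varepsilon)$, so the final ``whence'' does not follow from your own estimate. As written, then, your argument proves the theorem only with a worse constant; to recover the paper's $f$ you need the exact value of $h$, together with a remark on the regime $r>K/4$, where $h=\tfrac14<\tfrac rK$ and the direct unwinding gives a bound of the form $\frac{p\,2^{3p+3}K^{p+1}}{r\varepsilon^p}$ rather than $f(\varepsilon)$ (a corner case the paper's one-line proof also passes over silently).
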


\begin{proof}
For $2\leq p<\infty$ a modulus of uniform convexity of $L_p$ is given by $\Phi(\varepsilon)=\frac{\varepsilon^p}{p2^p}$ (see \cite[p. 63]{LinTza(1979.0)}). The result follows by the observations above and unwinding the definitions of Theorem \ref{res-mined}.
\end{proof}

\subsection{Asymptotically nonexpansive mappings}
\label{sec-special-asymp}

The case where $T$ is asymptotically nonexpansive w.r.t. some $\mu_n\to 1$ is slightly more interesting, as in general $\Gamma$ will now depend on information about the convergence of $(\mu_n)_{n\in\NN}$. The following is essentially a quantitative version of Lemma \ref{lem-anone-asdecT}.

\begin{lemma}
\label{lem-anonemined}
Suppose that $T:C\to C$ is asymptotically nonexpansive w.r.t. $\mu_n\to 1$ with $B_r[p]\subset \fix{T}$ and $x\in C$ is such that $\norm{x-p}<K$. Suppose in addition that $(\mu_n)_{n\in\NN}$ is bounded above by some $L>0$ and has a rate of metastability $\phi$ which satisfies
\begin{equation*}
\forall\delta>0,h:\NN\to\NN\exists k\leq\phi(\delta,h)\forall m\in [k,k+h(k)](\mu_m\leq 1+\delta).
\end{equation*}
Then the function $\Gamma$ given by
\begin{equation*}
\Gamma_{L,\phi}(K,r,\varepsilon,g,N):=N+\phi\left(\frac{\varepsilon}{L(K+r)},g_N\right)
\end{equation*}
where $g_N(k):=g(N+k)$, is a metastable rate of asymptotic decreasingness satisfying (\ref{eqn-condc}), uniformly for $q\in B_r[p]$.
\end{lemma}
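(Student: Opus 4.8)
The plan is to verify that $\Gamma_{L,\phi}$ as defined satisfies (\ref{eqn-condc}) directly, by unwinding what the asymptotic nonexpansiveness gives us and matching it against the metastability of $(\mu_n)$. First I would fix arbitrary parameters $\varepsilon>0$, $g:\NN\to\NN$, $N\in\NN$ and some $q\in B_r[p]$, and look for $n$ in the required range $[0,\Gamma_{L,\phi}(K,r,\varepsilon,g,N)]$ such that $\norm{T^ix-q}\leq\norm{T^Nx-q}+\varepsilon$ for all $i\in[n,n+g(n)]$. The natural candidate is $n:=N+k$ where $k$ is obtained from the metastability of $(\mu_m)$; so the first substantive move is to decide which arguments to feed $\phi$. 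Since we want to control $\mu_m$ for indices $m$ ranging over $[k,k+g_N(k)]$ (because $i\in[n,n+g(n)]$ translates, via $i=N+m$, to $m\in[k,k+g(N+k)]=[k,k+g_N(k)]$), I feed $\phi$ the shifted function $g_N$ and the accuracy $\delta:=\frac{\varepsilon}{L(K+r)}$, obtaining $k\leq\phi(\delta,g_N)$ with $\mu_m\leq 1+\delta$ for all $m\in[k,k+g_N(k)]$. Then $n:=N+k\leq N+\phi(\delta,g_N)=\Gamma_{L,\phi}(K,r,\varepsilon,g,N)$ as desired.

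Next I would carry out the core estimate, which is just the quantitative version of the computation in Lemma \ref{lem-anone-asdecT}. Fix $i\in[n,n+g(n)]$, write $i=N+m$ with $m\in[k,k+g_N(k)]$, and use $T^mq=q$ together with asymptotic nonexpansiveness to get
\begin{equation*}
\norm{T^ix-q}=\norm{T^{N+m}x-T^mq}\leq\mu_m\norm{T^Nx-q}\leq(1+\delta)\norm{T^Nx-q}=\norm{T^Nx-q}+\delta\norm{T^Nx-q}.
\end{equation*}
It then remains to check $\delta\norm{T^Nx-q}\leq\varepsilon$. Here I would bound $\norm{T^Nx-q}\leq\norm{T^Nx-p}+\norm{p-q}\leq\norm{T^Nx-p}+r$, and then bound $\norm{T^Nx-p}$. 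Since $q\in B_r[p]\subset\fix{T}$ and $T$ is asymptotically nonexpansive, $\norm{T^Nx-p}=\norm{T^Nx-T^Np}\leq\mu_N\norm{x-p}\leq L\cdot K$, so $\norm{T^Nx-q}\leq LK+r\leq L(K+r)$ (using $L\geq 1$, which follows from $\mu_n\to 1$ and, say, $\mu_n\geq 1$ in the standard definition — or more robustly just from $L$ being an upper bound with $L\geq\mu_0\geq 1$). Hence $\delta\norm{T^Nx-q}\leq\frac{\varepsilon}{L(K+r)}\cdot L(K+r)=\varepsilon$, which finishes the argument.

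The only mild subtlety — and the step I'd flag as the place to be careful rather than a genuine obstacle — is the uniformity in $q$: the bound must not depend on $q$ beyond $q\in B_r[p]$, and the estimate above achieves this precisely because $\norm{T^Nx-q}$ is controlled by $L(K+r)$ uniformly, and because $T^mq=q$ for every such $q$ (as $B_r[p]\subset\fix{T}$ implies every point of the ball is a fixed point, hence fixed by all iterates). I should also double-check the edge case $\norm{T^Nx-q}=0$, which only makes the inequality easier, and confirm that $\frac{\varepsilon}{L(K+r)}$ is a legitimate argument for $\phi$ (it is positive, which is all that is required). Assembling these pieces gives (\ref{eqn-condc}) with the stated $\Gamma_{L,\phi}$, uniformly for $q\in B_r[p]$, completing the proof.
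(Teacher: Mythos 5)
Your proposal is correct and follows essentially the same route as the paper: feed $\phi$ the shifted counterfunction $g_N$ and accuracy $\frac{\varepsilon}{L(K+r)}$, set $n:=N+k$, and use $T^mq=q$ with asymptotic nonexpansiveness to reduce everything to $\delta\norm{T^Nx-q}\leq\varepsilon$. The only (harmless) divergence is in bounding $\norm{T^Nx-q}$: you use $\norm{T^Nx-q}\leq\norm{T^Nx-p}+r\leq LK+r\leq L(K+r)$, which needs $L\geq 1$ (true, since $L\geq\mu_n$ for all $n$ and $\mu_n\to 1$), whereas the paper gets $\norm{T^Nx-q}\leq\mu_N\norm{x-q}\leq\mu_N(\norm{x-p}+\norm{p-q})\leq L(K+r)$ directly without that remark.
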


\begin{proof}
Fixing some $\varepsilon>0,g:\NN\to\NN,N$ and $q\in B_r[p]\subset\fix{T}$, we first note that there exists some $k\leq \phi\left(\frac{\varepsilon}{L(K+r)},g_N\right)$ such that
\begin{equation}
\label{eqn-anonemined0}
\forall m\in [k,k+g(N+k)]\left(\mu_m\leq 1+\frac{\varepsilon}{L(K+r)}\right).
\end{equation}
Setting $n:=N+k\leq \Gamma_{L,\phi}(K,r,\varepsilon,g,N)$, for any $i\in [n,n+g(n)]$ we have $i=N+k+j$ for some $0\leq j\leq g(N+k)$, and therefore
\begin{equation*}
\begin{aligned}
\norm{T^ix-q}&=\norm{T^{N+k+j}x-T^{k+j}q}\\
&\leq \mu_{k+j}\norm{T^Nx-q}\\
&\leq \norm{T^Nx-q}+\frac{\varepsilon \norm{T^Nx-q}}{L(K+r)}
\end{aligned}
\end{equation*}
where
\begin{equation*}
\begin{aligned}
\frac{\varepsilon \norm{T^Nx-q}}{L(K+r)}&\leq \frac{\varepsilon \mu_N\norm{x-q}}{L(K+r)}\\
&\leq \frac{\varepsilon \mu_N(\norm{x-p}+\norm{p-q})}{L(K+r)}\\
&\leq \frac{\varepsilon L(K+r)}{L(K+r)}=\varepsilon
\end{aligned}
\end{equation*}
and thus we've shown $\norm{T^ix-q}\leq \norm{T^Nx-q}+\varepsilon$.
\end{proof}

Therefore instantiating Theorem \ref{res-mined} with $\Gamma$ as defined in Lemma \ref{lem-anonemined} would result in a rate of metastability $\Omega_{L,\phi}(\Phi,K,r,\varepsilon,g)$ for the Picard iterates in the case where $T$ is asymptotically nonexpansive w.r.t. $\mu_n\to 1$, in terms of the usual data plus an upper bound $L$ and rate of metastability $\phi$ for $(\mu_n)_{n\in\NN}$.

Note that it is far more common for asymptotically nonexpansive maps to be defined w.r.t. some decreasing $(\mu_n)_{n\in\NN}$, in which case convergence $\mu_n\to 1$ is actually a $\forall\exists$ formula 
\begin{equation*}
\forall\delta>0\exists k(\mu_k\leq 1+\delta)
\end{equation*}
and from a proof of the above we would generally expect to be able to produce an explicit rate of convergence $c(\delta)$ for $(\mu_n)_{n\in\NN}$ satisfying
\begin{equation*}
\forall \delta>0(\mu_{c(\delta)}\leq 1+\delta).
\end{equation*}
In this case, a simplification of the proof of Lemma \ref{lem-anonemined} would result in a rate of asymptotic decreasingness given by
\begin{equation*}
\Gamma_{L,c}(K,r,\varepsilon,g,N):=N+c\left(\frac{\varepsilon}{L(K+r)}\right)
\end{equation*}
where now we only require that $L>\mu_0$. One can easily show that in this case $\Gamma^\ast_{L,c}(K,r,\varepsilon,g,N)=\Gamma_{L,c}(K,r,\varepsilon,g,N)$, and therefore in this case, by Theorem \ref{res-mined}, a rate of metastability for the Picard iterates would be given by
\begin{equation*}
\Omega_{L,c}(\Phi,K,r,\varepsilon,g):=(g_\omega)^{(\ceil{K/\eta})}(0)+\omega
\end{equation*}
where $g_\omega(j):=j+\omega+g^\ast(j+\omega)$ and $\omega:=c\left(\frac{\eta}{L(K+r)}\right)$, and $\eta$ is again defined as in Theorem \ref{res-mined}.\\

\noindent\textbf{Acknowledgements.} I am grateful to Ulrich Kohlenbach for proposing the study of \cite{KirSim(1999.0)}, and to both Ulrich Kohlenbach and Andrei Sipo\c s for numerous helpful comments and suggestions on earlier drafts of this paper. I would also like to thank the anonymous referee for their corrections. This work was supported by the German Science Foundation (DFG Project KO 1737/6-1).

\bibliographystyle{plain}
\bibliography{/home/thomas/Dropbox/tp}

\end{document}